\numberwithin{equation}{section}
\theoremstyle{plain}
\newtheorem{thm}{Theorem}[section]
\newtheorem{lem}[thm]{Lemma}
\newtheorem{cor}[thm]{Corollary}
\newtheorem{prop}[thm]{Proposition}
\theoremstyle{definition}
\newtheorem{?}[thm]{Problem}
\theoremstyle{definition}
\newtheorem*{nt*}{Notation}
\newcommand{\cala}{\mathcal A}
\newcommand{\calc}{\mathcal C}
\newcommand{\calh}{\mathcal H}
\newcommand{\calj}{\mathcal J}
\newcommand {\C} {\mathbb C}
\newcommand {\R} {\mathbb R}
\newcommand {\Z} {\mathbb Z}
\newcommand {\D} {\mathbb D}
\newcommand {\T} {\mathbb T}
\newcommand {\mba} {\mathbb A}
\newcommand {\mbb} {\mathbb B}
\newcommand {\mbh} {\mathbb H}
\newcommand {\mbo} {\mathbb O}
\newcommand {\bfa} {\mathbf a}
\newcommand {\bfb} {\mathbf b}
\newcommand {\bfc} {\mathbf c}
\newcommand {\bfp} {\mathbf p}
\newcommand {\bfq} {\mathbf q}
\newcommand {\bfr} {\mathbf r}
\newcommand {\spc} {\mathcal{B}_\ell(\mathbb{D}^d)}
\newcommand {\dom} {\mathbb{D}^d}
\newcommand {\pa} {\partial}
\newcommand {\zd} {\mathbb{Z}_{1,d}}
\newcommand {\tha} {\theta}
\newcommand {\ome} {\omega}
\newcommand {\al} {\alpha}
\begin{document}

\title[]{real symmetric, unitary, and complex symmetric weighted composition operators on Bergman spaces of polydisk}

\author{Pham Viet Hai}%
\address[P. V. Hai]{School of Applied Mathematics and Informatics, Hanoi University of Science and Technology, Vien Toan ung dung va Tin hoc, Dai hoc Bach khoa Hanoi, 1 Dai Co Viet, Hanoi, Vietnam.}%
\email{hai.phamviet@hust.edu.vn}

 \subjclass[2010]{47B33, 47B32}

 \keywords{complex symmetry, weighted composition operators, Bergman spaces}

\begin{abstract}
In this paper, we study weighted composition operators on Bergman spaces of analytic functions which are square integrable on polydisk. We develop the study in full generality, meaning that the corresponding weighted composition operators are not assumed to be bounded. The properties of weighted composition operators such as real symmetry, unitariness, complex symmetry, are characterized fully in simple algebraic terms, involving their symbols. As it turns out, a weighted composition operator having a symmetric structure must be bounded. We also obtain the interesting consequence that real symmetric weighted composition operators are complex symmetric corresponding an adapted and highly relevant conjugation.
\end{abstract}

\maketitle

\section{Introduction}
\subsection{Weighted composition operators}
Our primary object of study is weighted composition operators that are defined as follows. Let $\mbo$ be a Banach space of analytic functions on some domain $V\subseteq\C^d$, where $d\in\Z_{\geq 1}$. Given functions $g:V\to V,f:V\to\C$ (often called the \emph{symbols}), we consider the formal expression $\widetilde{[f,g]}$ by setting
\begin{gather*}
    (\widetilde{[f,g]}\xi)(z)=f(z)\xi(g(z)),\quad\text{where $\xi(\cdot)\in\mbo,z\in V$}.
\end{gather*}
The \emph{maximal weighted composition operator} is defined as
\begin{gather*}
    \text{dom}(W_{f,g,\max})=\{\xi(\cdot)\in\mbo:\widetilde{[f,g]}\xi(\cdot)\in\mbo\},\\
    W_{f,g,\max}\xi(\cdot)=\widetilde{[f,g]}\xi(\cdot),\quad\xi(\cdot)\in\text{dom}(W_{f,g\max}).
\end{gather*}
The operator $W_{f,g}$ is called a \emph{non-maximal weighted composition operator} if the inclusion $W_{f,g}\preceq W_{f,g,\max}$ holds. By the term \emph{bounded weighted composition operator}, as it is used in the paper, is meant an operator $W_{f,g}$ which satisfies (i) $\text{dom}(W_{f,g})=\mbo$ and (ii) there exists a constant $R>0$ such that
\begin{gather*}
    \|W_{f,g}h\|\leq R\|h\|\quad\forall h(\cdot)\in\mbo.
\end{gather*}
Note that the phrase "unbounded" is understood as "not necessarily bounded"; in other words, bounded operators belong to the unbounded class. When $f(\cdot)\equiv 1$, writing $C_g$ instead of $W_{1,g}$ and it is named a \emph{composition operator}.

The earliest reference on weighted composition operators appears to be the Banach-Stone theorem, which states that the only surjective isometries between Banach spaces of real-valued continuous functions are precisely of this type (\cite{zbMATH00467266}). Leeuw et al \cite{zbMATH03158247} and Forelli \cite{zbMATH03213975} obtained the similar results for Hardy spaces. Since then, weighted composition operators have become the subject matter of intensive and extensive research on various spaces and they have made important connections to the study of other operators (\cite{zbMATH03607039, zbMATH00933265}). What makes weighted composition operators worth to study is the fact that their properties can be characterized in simple algebraic terms. 

When $\mbo$ is Hardy space of the unit disk, there is a large body of work on composition operators (see \cite{zbMATH00918588, zbMATH00473375}). It was proven long by Littlewood \cite{zbMATH02591644} that composition operators are always bounded. In contrast to the unweighted setting, the boundedness of weighted composition operators has not been well understood. There are many examples (see \cite{zbMATH00918588}) which show that weighted composition operators are not bounded. Other properties have been studied such as conditions for (weighted) composition operators to be real symmetric \cite{zbMATH05815825}, be normal \cite{zbMATH05080402, zbMATH06074411}, be unitary \cite{zbMATH06074411}, or be invertible \cite{zbMATH05907565}.

Considered on Fock spaces, weighted composition operators act very differently; for example, not all composition operators are bounded. Carswell, MacCluer and Schuster \cite{zbMATH02099157} showed that the only affine function $g(z)=\al z+\beta$ gives rise to the boundedness. Le \cite{zbMATH06324457} gave a complete characterization of bounded weighted composition operators on Fock spaces. Characterizations of compactness, isometry, normality, cohyponormality, and supercyclicity have also been produced, including those of Le \cite{zbMATH06324457}, Hai and Khoi \cite{zbMATH06627851}, and Carroll and Gilmore \cite{MR4248470}.

\subsection{Complex symmetric operators}
A \emph{complex symmetric operator} is an unbounded linear operator $X:\text{dom}(X)\subseteq\calh\to\calh$ on a Hilbert space $\calh$ with the property that $X=\calc X^*\calc$, where $\calc$ is an isometric involution (in short, \emph{conjugation}) on $\calh$. To indicate the dependence on $\calc$, this case is often called \emph{$\calc$-selfadjoint}.

The concept of complex symmetric operators is a natural generalization of complex symmetric matrices, and their general research was commenced by Garcia, Putinar \cite{zbMATH02237890, zbMATH05148120}. The class of complex symmetric operators is of great interest for several reasons. First, it has deep classical roots in various branches of mathematics, including matrix theory, function theory, projective geometry, etc. See \cite{zbMATH02237890, zbMATH05148120, zbMATH06349831} for details. Second, it is large enough to cover well-known operators such as normal operators, Volterra operators, Hankel operators, compressed Toeplitz operators. More interestingly, differential operators relevant in non-hermitian quantum mechanics obeying a parity and time symmetry, so called \emph{$\mathcal{PT}$-symmetric}, belong to this class (see \cite{zbMATH01365858}).

Putted forward by Garcia and Hammond \cite{zbMATH06454968}, the problem of classifying complex symmetric weighted composition operators is of interest and it is being answered in several particular contexts. Jung et al \cite{zbMATH06320823} considered bounded weighted composition operators on Hardy spaces and characterized those which are complex symmetric with respect to the conjugation $\calj f(z)=\overline{f(\overline{z})}$. The works \cite{zbMATH06454968, zbMATH06320823} make a motivation to study the problem in various spaces. We refer the reader to \cite{zbMATH06487337, zbMATH07216720} for Fock space and \cite{zbMATH07053055} for Hardy spaces in several variables and \cite{zbMATH06882577} for Lebesgue spaces.


\subsection{Aim}
In this paper, we study weighted composition operators on Bergman spaces of analytic functions which are square integrable on polydisk. We develop the study in full generality, meaning that the corresponding weighted composition operators are not assumed to be bounded. The properties of weighted composition operators such as real symmetry, unitariness, complex symmetry, are characterized fully in simple algebraic terms, involving their symbols. As it turns out, a weighted composition operator having a symmetric structure must be bounded. We also obtain the interesting consequence that real symmetric weighted composition operators are complex symmetric corresponding an adapted and highly relevant conjugation. 

\section{Preparation}\label{sec2}
\subsection{Notations}
Before getting closer to the content, we list notations and terminologies. The domain of an operator is denoted as $\text{dom}(\cdot)$. When dealing with unbounded operators, the symbol $A\preceq B$ means that $\text{dom}(A)\subseteq\text{dom}(B)$ and $Ax=Bx$ for $x\in\text{dom}(A)$. Let $\zd=\{1,2,\cdots,d\}$. Always denote
\begin{gather}\label{eq-omep}
    \ome_p(z)=\dfrac{\overline{p}}{p}\cdot\dfrac{p-z}{1-\overline{p}z},\quad p\in\D\setminus\{0\}.
\end{gather}
Let $\D=\{z\in\C:|z|<1\}$ and $\T=\{z\in\C:|z|=1\}$. For a fixed integer $d$, the polydisk $\dom$ of $\C^d$ is the Cartesian product of $d$ copies of $\D$. Bergman space $\spc$ consists of all analytic functions $\xi:\dom\to\C$ with
\begin{gather*}
    \|\xi\|^2=\int\limits_{\dom}|\xi(z)|^2\prod_{j=1}^d(1+\ell_j)(1-|z_j|^2)^{\ell_j}\,dA(z_j),
\end{gather*}
where $dA$ denotes the normalized area measure on $\D$. The reproducing kernel in $\spc$ is given by
\begin{gather*}
    K_z(u)=\prod_{j=1}^d\dfrac{1}{(1-u_j\overline{z_j})^{\ell_j+2}}\quad\forall z,u\in\dom.
\end{gather*}

\subsection{Elementary estimate}
The following lemma may be well-known; but the proof is given, for a completeness of exposition.
\begin{lem}\label{lem-202106091438}
Let $\bfc\in\D^t$ and $s\in\Z_{\geq 1}$. Then there exists a constant $\Delta$ such that
\begin{gather*}
    \int\limits_{\D^s}|h(\bfc,y)|^2\prod_{j=1}^s(1-|y_j|^2)^{\ell_j}dA(y_j)
    \leq\Delta\int\limits_{\D^t\times\D^s}|h(z)|^2\prod_{j=1}^{t+s}(1-|z_j|^2)^{\ell_j}dA(z_j).
\end{gather*}
Consequently, if $h(\cdot)\in\mathcal{B}_\ell(\D^t\times\D^s)$, then the function $h_{\bfc}(\cdot)\in\mathcal{B}_\ell(\D^s)$, where $h_{\bfc}(y)=h(\bfc,y)$.
\end{lem}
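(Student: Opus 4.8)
The plan is to reduce the multivariable estimate to a one‑variable statement and iterate. Fix $\bfc=(c_1,\dots,c_t)\in\D^t$. For a fixed point $\bfc$, the inner integral on the left is a function of $y\in\D^s$; the key observation is that for each fixed $y$, the slice function $z_1\mapsto h(z_1,c_2,\dots,c_t,y)$ is analytic on $\D$, and more generally we can peel off the variables $z_1,\dots,z_t$ one at a time. So it suffices to prove the following one‑variable sub‑mean‑value inequality: there is a constant $\delta=\delta(c,\ell_j)$ such that for every function $\varphi$ analytic on $\D$ with $\int_{\D}|\varphi(w)|^2(1-|w|^2)^{\ell_j}\,dA(w)<\infty$ one has
\begin{gather*}
    |\varphi(c)|^2\leq\delta\int\limits_{\D}|\varphi(w)|^2(1-|w|^2)^{\ell_j}\,dA(w).
\end{gather*}
This is the standard pointwise estimate in a weighted Bergman space: $\varphi(c)=\langle\varphi,K^{(j)}_c\rangle$ where $K^{(j)}_c$ is the reproducing kernel of $\mathcal{B}_{\ell_j}(\D)$, so by Cauchy--Schwarz $|\varphi(c)|^2\leq\|K^{(j)}_c\|^2\,\|\varphi\|^2$, and $\|K^{(j)}_c\|^2=K^{(j)}_c(c)=(1-|c|^2)^{-(\ell_j+2)}$ is finite and bounded on compact subsets of $\D$. (Alternatively one invokes the classical sub‑mean‑value property over a disk of radius $\tfrac12(1-|c|)$ centred at $c$, on which the weight is comparable to a constant.)

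Next I would carry out the peeling. Write $z=(z_1,z')$ with $z'=(z_2,\dots,z_{t+s})$, and apply Fubini to the right‑hand side. For almost every $z'$, the function $z_1\mapsto h(z_1,z')$ lies in $\mathcal{B}_{\ell_1}(\D)$ by Fubini, so the one‑variable estimate gives
\begin{gather*}
    |h(c_1,z')|^2\leq\delta_1\int\limits_{\D}|h(z_1,z')|^2(1-|z_1|^2)^{\ell_1}\,dA(z_1).
\end{gather*}
Multiplying by $\prod_{j=2}^{t+s}(1-|z_j'|^2)^{\ell_{j}}$ and integrating in $z'$ shows that $h(c_1,\cdot)$ belongs to $\mathcal{B}_\ell(\D^{t-1}\times\D^s)$ with the corresponding norm bound. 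Repeating this $t$ times, substituting $c_2,\dots,c_t$ successively, produces the constant $\Delta=\delta_1\delta_2\cdots\delta_t$ and the asserted inequality, and simultaneously shows $h_\bfc\in\mathcal{B}_\ell(\D^s)$ (analyticity of $h_\bfc$ is immediate since it is a restriction of an analytic function to a coordinate slice).

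The only genuine subtlety — and the step I would be most careful about — is the measurability/Fubini bookkeeping: one must check that after fixing $z'$ outside a null set the slice is in the weighted Bergman space, that the pointwise bound is then measurable in $z'$, and that the finiteness of the full integral is not lost when passing to the iterated integral. This is routine once the one‑variable kernel estimate is in hand, since all the integrands are nonnegative and measurable, so Tonelli applies without integrability hypotheses. No serious obstacle is expected; the lemma is essentially the observation that point evaluation at an interior point, and restriction to a coordinate slice, are bounded operations on these weighted Bergman spaces, proved by the reproducing kernel together with Fubini.
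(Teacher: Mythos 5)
Your proof is correct and is essentially the paper's argument: both rest on the sub-mean-value property of $|h|^2$ over a disk of radius $(1-|c_j|)/2$ about the evaluation point, where the weight $(1-|z_j|^2)^{\ell_j}$ is bounded below, combined with Fubini--Tonelli. The only cosmetic difference is that the paper applies the sub-mean-value inequality to all $t$ variables at once over the polydisk $D(\bfc,\delta)^t$, whereas you peel the variables off one at a time (and also note the equivalent reproducing-kernel route).
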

\begin{proof}
Denote $\delta_j=(1-|\bfc_j|)/2$ and $D(\bfc,\delta)^t=\{z=(z_1,z_2,\cdots,z_t)\in\C^t:|z_j-c_j|<\delta_j\,\forall 1\leq j\leq t\}$. Setting
\begin{gather*}
    W=\int\limits_{\D^t\times\D^s}|h(z)|^2\prod_{j=1}^{t+s}(1-|z_j|^2)^{\ell_j}dA(z_j),
\end{gather*}
by the Fubini theorem, we have
\begin{gather*}
    W=
    \int\limits_{\D^s}\prod_{j=1}^s(1-|y_j|^2)^{\ell_j}dA(y_j)
    \int\limits_{\D^t}|h(x,y)|^2\prod_{j=1}^t(1-|x_j|^2)^{\ell_j}dA(x_j)\\
    \geq\int\limits_{\D^s}\prod_{j=1}^s(1-|y_j|^2)^{\ell_j}dA(y_j)
    \int\limits_{D(\bfc,\delta)^t}|h(x,y)|^2\prod_{j=1}^t(1-|x_j|^2)^{\ell_j}dA(x_j)\\
    \geq
    \prod_{j=1}^t\left(1-\left(\dfrac{1+|\bfc_j|}{2}\right)^2\right)^{\ell_j}
    \int\limits_{\D^s}\prod_{j=1}^s(1-|y_j|^2)^{\ell_j}dA(y_j)
    \int\limits_{D(\bfc,\delta)^t}|h(x,y)|^2dA(x_j),
\end{gather*}
which implies, by the subharmonic property of the function $x\mapsto|h(x,y)|^2$, that
\begin{gather*}
    W
    \geq\prod_{j=1}^t\left(1-\left(\dfrac{1+|\bfc_j|}{2}\right)^2\right)^{\ell_j}
    \int\limits_{\D^s}\prod_{j=1}^s(1-|y_j|^2)^{\ell_j}dA(y_j)\pi^t\prod_{j=1}^t\delta_j|h(\bfc,y)|^2\\
    =\pi^t\prod_{j=1}^t\delta_j\left(1-\left(\dfrac{1+|\bfc_j|}{2}\right)^2\right)^{\ell_j}
    \int\limits_{\D^s}|h(\bfc,y)|^2\prod_{j=1}^s(1-|y_j|^2)^{\ell_j}dA(y_j).
\end{gather*}
\end{proof}

We recall a condition when a linear fractional function is a self-mapping of $\D$.
\begin{lem}[{\cite{zbMATH06077514}}]\label{202202242145}
Let
\begin{gather*}
    \xi(z)=\dfrac{az+b}{cz+d},\quad\text{where $a,b,c,d\in\C$ with $ad-bc\ne 0$}.
\end{gather*}
Then $\xi(\cdot)$ is a self-mapping of $\D$ if and only if
\begin{gather*}
    |b\overline{d}-a\overline{c}|+|ad-bc|\leq|d|^2-|c|^2.
\end{gather*}
\end{lem}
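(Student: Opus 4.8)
The plan is to transfer the self-mapping condition on $\xi$ to a condition on the inverse M\"obius map $\eta=\xi^{-1}$ acting on the exterior of $\D$; once this is done, the whole statement reduces to one completion-of-squares estimate. Before that, note that if $|d|\le|c|$ the equivalence is vacuous: when $|d|<|c|$ the point $-d/c$ lies in $\D$ and is a genuine pole of $\xi$ (the numerator $a(-d/c)+b=(bc-ad)/c$ is nonzero because $ad-bc\ne0$), and when $|d|=|c|$ one has $c\ne0$, $-d/c\in\T$, and $|\xi(r(-d/c))|\to\infty$ as $r\to1^{-}$; in both cases $\xi$ fails to be a self-map, while $|d|^2-|c|^2\le0<|b\overline d-a\overline c|+|ad-bc|$ makes the inequality fail too. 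So from now on assume $|d|>|c|$; then $\xi$ is holomorphic on a neighbourhood of $\overline\D$ and, being nonconstant, is an open map on $\D$.

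Write $\eta(w)=\dfrac{dw-b}{-cw+a}$ and $E=\widehat{\C}\setminus\overline\D=\{w:|w|>1\}\cup\{\infty\}$, where $\widehat{\C}=\C\cup\{\infty\}$. I would first establish that $\xi(\D)\subseteq\D$ if and only if $\eta(E)\subseteq E$. Since $\xi(\D)$ is open, $\xi(\D)\subseteq\D\iff\xi(\D)\subseteq\overline\D$; applying the bijection $\xi$ of $\widehat{\C}$ to complements gives $\xi(\D)\subseteq\overline\D\iff E\subseteq\xi(\widehat{\C}\setminus\D)\iff\eta(E)\subseteq\widehat{\C}\setminus\D$, and since $\eta(E)$ is open and $E$ is the interior of $\widehat{\C}\setminus\D=\overline E$, the last inclusion is equivalent to $\eta(E)\subseteq E$. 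One checks directly that $\eta(\infty)\in E$ using $|d|>|c|$, so the remaining content is $|dw-b|^2>|cw-a|^2$ for every finite $w$ with $|w|>1$.

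This last inequality is the main step. Put $h(w)=|dw-b|^2-|cw-a|^2=K|w|^2-2\re(\beta w)+\gamma$ with $K=|d|^2-|c|^2>0$, $\beta=d\overline b-c\overline a$, $\gamma=|b|^2-|a|^2$, and complete the square: $h(w)=K\,|w-\overline\beta/K|^2-|\beta|^2/K+\gamma$. The key algebraic identity, proved by a one-line expansion (both sides equal $|a|^2|c|^2+|b|^2|d|^2-2\re(ad\,\overline{bc})$), is
\[
|d\overline b-c\overline a|^2=|ad-bc|^2+(|d|^2-|c|^2)(|b|^2-|a|^2);
\]
it turns $-|\beta|^2/K+\gamma$ into $-|ad-bc|^2/K$, so that $h(w)=K\,|w-w_{*}|^2-|ad-bc|^2/K$ with $w_{*}=\overline\beta/K$ and $|w_{*}|=|d\overline b-c\overline a|/K=|b\overline d-a\overline c|/K$ (the last equality because $d\overline b-c\overline a$ and $b\overline d-a\overline c$ are complex conjugates). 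Hence $h>0$ on $\{|w|>1\}$ precisely when the closed disk $\overline B\bigl(w_{*},|ad-bc|/K\bigr)$ contains no point of modulus exceeding $1$, i.e.\ when it is contained in $\overline\D$, i.e.\ when $|w_{*}|+|ad-bc|/K\le1$; multiplying by $K$ this is exactly $|b\overline d-a\overline c|+|ad-bc|\le|d|^2-|c|^2$, which (with the vacuous case above) completes the proof.

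I expect the genuinely delicate part to be the reduction to $\eta$ on $E$: one has to be careful about poles of $\xi$ and of $\eta$ on $\overline\D$, about the point at infinity, and about invoking the open mapping theorem to pass from $\overline\D$ to $\D$. The point of working with $\eta$ rather than estimating $|az+b|^2\le|cz+d|^2$ on $\overline\D$ directly is that the quadratic $|cz+d|^2-|az+b|^2$ is concave or convex according as $|a|>|c|$ or $|a|<|c|$, which would force an unpleasant case split around its vertex, whereas the quadratic $h$ governing $\eta$ is uniformly convex once $|d|>|c|$ is known, so the square-completion estimate runs through without cases.
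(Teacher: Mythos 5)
Your proof is correct. Note that the paper does not prove this lemma at all --- it is quoted from the cited reference \cite{zbMATH06077514} --- so there is no internal argument to compare against; what you have supplied is a complete, self-contained verification. The structure is sound: the preliminary disposal of the case $|d|\le|c|$ (pole in $\D$, respectively unboundedness along the radius to $-d/c\in\T$, versus automatic failure of the inequality since $|ad-bc|>0$) correctly makes both sides of the equivalence false there; the passage to the inverse map $\eta$ on $E=\widehat{\C}\setminus\overline{\D}$ via complementation and the open mapping theorem is airtight, including the treatment of the pole $w=a/c$ of $\eta$ (where $|dw-b|^2>0=|cw-a|^2$ precisely because $ad-bc\ne0$) and of the point at infinity; and the key identity
\begin{gather*}
|d\overline{b}-c\overline{a}|^2=|ad-bc|^2+(|d|^2-|c|^2)(|b|^2-|a|^2)
\end{gather*}
checks out (both sides equal $|a|^2|c|^2+|b|^2|d|^2-2{\rm Re}\,(ad\,\overline{bc})$), reducing everything to the geometric statement that the closed disk of center $\overline{\beta}/K$ and radius $|ad-bc|/K$ lies in $\overline{\D}$, i.e.\ $|b\overline{d}-a\overline{c}|+|ad-bc|\le|d|^2-|c|^2$. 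Your closing remark is also well taken: estimating $|az+b|^2\le|cz+d|^2$ directly on $\overline{\D}$ leads to a quadratic whose leading coefficient $|c|^2-|a|^2$ has no controlled sign, whereas the inverse-map quadratic is uniformly convex once $|d|>|c|$ is secured, which is exactly why the completion of squares runs through without case distinctions.
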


\subsection{Algebraic observations}
\begin{lem}\label{20220224}
Let $f:\dom\to\C,g:\D\to\C$ be analytic functions, where $d\in\Z_{\geq 1}$. Suppose that for every $\kappa\in\zd$, the product $g(u_\kappa)f(u_1,u_2,\cdots,u_n)$ is a function of variables $u_1,u_2,\cdots,u_{\kappa-1},u_{\kappa+1},\cdots,u_n$. Then for every $\kappa\in\zd$ the product $f(u_1,u_2,\cdots,u_n)\prod_{j=1}^{\kappa}g(u_j)$ is a function of variables $u_{\kappa+1},u_{\kappa+2},\cdots,u_n$.
\end{lem}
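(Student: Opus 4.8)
The plan is to argue by induction on $\kappa$. The hypothesis states that for every $\kappa\in\zd$, the product $g(u_\kappa)f(u_1,\dots,u_n)$ does not depend on the variable $u_\kappa$; equivalently, $\partial_{u_\kappa}\bigl(g(u_\kappa)f(u)\bigr)\equiv 0$ for each $\kappa$. I want to upgrade this to: $f(u)\prod_{j=1}^{\kappa}g(u_j)$ depends only on $u_{\kappa+1},\dots,u_n$, i.e.\ it is annihilated by $\partial_{u_1},\dots,\partial_{u_\kappa}$ simultaneously. The base case $\kappa=1$ is exactly the hypothesis. For the inductive step, suppose $F_\kappa(u):=f(u)\prod_{j=1}^{\kappa}g(u_j)$ is independent of $u_1,\dots,u_\kappa$. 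Then $F_{\kappa+1}(u)=F_\kappa(u)\,g(u_{\kappa+1})$, and since $F_\kappa$ does not involve $u_1,\dots,u_\kappa$ while $g(u_{\kappa+1})$ obviously does not either, $F_{\kappa+1}$ remains independent of $u_1,\dots,u_\kappa$; it remains only to check independence of $u_{\kappa+1}$.

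The key step is therefore to show $\partial_{u_{\kappa+1}}F_{\kappa+1}\equiv 0$, and this is where the hypothesis for the single index $\kappa+1$ must be used. By hypothesis, $g(u_{\kappa+1})f(u)$ is independent of $u_{\kappa+1}$, so on the open set where $\prod_{j=1}^{\kappa}g(u_j)\ne 0$ we may write
\begin{gather*}
F_{\kappa+1}(u)=\bigl(g(u_{\kappa+1})f(u)\bigr)\prod_{j=1}^{\kappa}g(u_j),
\end{gather*}
a product of a function not depending on $u_{\kappa+1}$ with a function not depending on $u_{\kappa+1}$; hence $F_{\kappa+1}$ is locally independent of $u_{\kappa+1}$ there. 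Combined with the independence of $u_1,\dots,u_\kappa$ already established, $F_{\kappa+1}$ is locally constant in all of $u_1,\dots,u_{\kappa+1}$ on that open set, and being analytic on the connected domain $\dom$ it is globally independent of $u_1,\dots,u_{\kappa+1}$ by the identity principle — provided the set $\{\prod_{j=1}^{\kappa}g(u_j)\ne 0\}$ is nonempty (equivalently, $g\not\equiv 0$).

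The main obstacle is the degenerate case $g\equiv 0$ (or, more subtly, ensuring the factorization argument is legitimate when $g$ has zeros). If $g\equiv 0$, then $f(u)\prod_{j=1}^{\kappa}g(u_j)\equiv 0$ for every $\kappa\ge 1$, which is trivially a function of $u_{\kappa+1},\dots,u_n$, so the statement holds vacuously; thus one may assume $g\not\equiv 0$, in which case $\{g\ne0\}$ is a nonempty open subset of $\D$ and $\{\prod_{j=1}^\kappa g(u_j)\ne0\}$ is a nonempty open subset of $\dom$ on which the division above is valid. An alternative that sidesteps division entirely: differentiate directly, $\partial_{u_{\kappa+1}}F_{\kappa+1}=\bigl[\partial_{u_{\kappa+1}}\bigl(g(u_{\kappa+1})f(u)\bigr)\bigr]\prod_{j=1}^{\kappa}g(u_j)=0$ by the hypothesis applied to index $\kappa+1$, which is valid on all of $\dom$ and avoids any case distinction. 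This is the cleaner route and I would present it that way, mentioning the $g\equiv0$ remark only if needed for the $\kappa=0$ or vacuous cases. The induction then closes and gives the claim for all $\kappa\in\zd$.
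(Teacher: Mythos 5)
Your proof is correct and follows exactly the route the paper indicates (induction on $\kappa$, with the proof itself left to the reader); the base case is the hypothesis for the index $1$, and your closing observation that $\partial_{u_{\kappa+1}}F_{\kappa+1}=\bigl[\partial_{u_{\kappa+1}}\bigl(g(u_{\kappa+1})f(u)\bigr)\bigr]\prod_{j=1}^{\kappa}g(u_j)=0$ is the clean way to close the inductive step, avoiding any division by $g$ or case distinction on its zeros.
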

\begin{proof}
The lemma is proven by induction on $\kappa$ and its proof is left to the reader. 
\end{proof}

\begin{lem}\label{lem-202103262022}
Let $\psi:\D\to\D$ be an analytic function with the property that
\begin{gather}\label{eq-202103262022}
    \psi'(u)(1-u\overline{\psi(z)})^2
    =\overline{\psi'(z)}(1-\psi(u)\overline{z})^2\quad\forall z,u\in\D.
\end{gather}
Then
\begin{gather}\label{202208211011}
    \psi(z)=\bfa+\dfrac{\bfb z}{1-\overline{\bfa}z},
\end{gather}
where coefficients satisfy
\begin{gather}
    \label{202208211012}\bfa\in\D,\bfb\in\R,\\
    \label{202208211013}|\bfa(\bfb-|\bfa|^2+1)|+|\bfb|\leq 1-|\bfa|^2.
\end{gather}
\end{lem}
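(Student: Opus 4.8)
The plan is to extract the entire conclusion from a single slice of \eqref{eq-202103262022}, namely $z=0$, and then to read off the coefficient constraints from the self-map hypothesis by invoking Lemma \ref{202202242145}.

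First I would set $z=0$ in \eqref{eq-202103262022} and write $\bfa:=\psi(0)$, noting $\bfa\in\D$ since $\psi(\D)\subseteq\D$. The right-hand side collapses to $\overline{\psi'(0)}$, so the identity reduces to $\psi'(u)(1-u\overline{\bfa})^2=\overline{\psi'(0)}$ for all $u\in\D$, that is,
\[
\psi'(u)=\frac{\overline{\psi'(0)}}{(1-\overline{\bfa}u)^2}.
\]
Evaluating this at $u=0$ forces $\psi'(0)=\overline{\psi'(0)}$, hence $\bfb:=\psi'(0)\in\R$, which is \eqref{202208211012}; in particular $\overline{\psi'(0)}=\bfb$. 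Integrating $\psi'$ from $0$ to $z$ and using $\psi(0)=\bfa$ — with $\int_0^z(1-\overline{\bfa}t)^{-2}\,dt=z/(1-\overline{\bfa}z)$ when $\bfa\ne0$, and the primitive $z$ when $\bfa=0$ — yields in both cases
\[
\psi(z)=\bfa+\frac{\bfb z}{1-\overline{\bfa}z},
\]
which is \eqref{202208211011}.

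To obtain \eqref{202208211013}, I would put $\psi$ over a common denominator, $\psi(z)=\bigl((\bfb-|\bfa|^2)z+\bfa\bigr)/(-\overline{\bfa}z+1)$, and apply Lemma \ref{202202242145} with $a=\bfb-|\bfa|^2$, $b=\bfa$, $c=-\overline{\bfa}$, $d=1$. A direct computation gives $ad-bc=\bfb$, and (using $\overline c=-\bfa$ together with $\bfb\in\R$) $b\overline{d}-a\overline{c}=\bfa(\bfb-|\bfa|^2+1)$, and $|d|^2-|c|^2=1-|\bfa|^2$; thus the conclusion of Lemma \ref{202202242145} is exactly \eqref{202208211013}, provided $ad-bc=\bfb\ne0$. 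The degenerate case $\bfb=0$ is handled directly: then $\psi\equiv\bfa$ is a constant self-map of $\D$, and \eqref{202208211013} reads $|\bfa|(1-|\bfa|^2)\le1-|\bfa|^2$, which holds since $|\bfa|<1$.

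I do not anticipate a real obstacle: the heart of the matter is simply that the $z=0$ specialization of \eqref{eq-202103262022} already determines $\psi'$, hence $\psi$, completely, after which Lemma \ref{202202242145} does the rest. The only points demanding care are the sign bookkeeping when identifying the coefficients $a,b,c,d$ for Lemma \ref{202202242145}, and isolating the degenerate cases $\bfa=0$ (in the integration step) and $\bfb=0$ (where Lemma \ref{202202242145} is inapplicable) so that every step remains valid.
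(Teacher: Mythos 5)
Your proof is correct, and it takes a genuinely different and more economical route than the paper's. The paper fixes a point $z_\star$ with $\psi(z_\star)\ne 0$, integrates the resulting ODE to conclude only that $\psi$ is a general linear fractional map $(Az+B)/(Cz+D)$, and then must substitute that form back into the full two-variable identity \eqref{eq-202103262022} and equate coefficients in two separate cases ($C=0$ and $C\ne 0$) to extract the special shape \eqref{202208211011} and the reality of $\bfb$. You observe instead that the single slice $z=0$ already determines $\psi'$ exactly, namely $\psi'(u)=\overline{\psi'(0)}/(1-\overline{\psi(0)}u)^2$, so that evaluation at $u=0$ gives $\bfb=\psi'(0)\in\R$ and one integration gives \eqref{202208211011} outright, with no case split and no coefficient matching; since the lemma asserts only a necessary condition, using one slice of the hypothesis is perfectly legitimate. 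Both arguments finish by feeding the coefficients $a=\bfb-|\bfa|^2$, $b=\bfa$, $c=-\overline{\bfa}$, $d=1$ into the self-map criterion of Lemma \ref{202202242145} to obtain \eqref{202208211013} (the paper's citation of Lemma \ref{lem-202106091438} at that point is evidently a typo for Lemma \ref{202202242145}); your treatment is in fact slightly more careful here, since you isolate the degenerate case $\bfb=ad-bc=0$, where that lemma's hypothesis fails, and verify \eqref{202208211013} directly for the constant map. The only thing the paper's longer route buys is a template that it reuses almost verbatim in Lemmas \ref{lem-202105202050} and \ref{lem-202105291538}, where the relevant specialization is less immediately decisive.
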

\begin{proof}
It is clear that the case when $\psi(\cdot)\equiv\text{const}$ verifies \eqref{eq-202103262022}. Next is to consider the remaining case $\psi(\cdot)\not\equiv \text{const}$. Let $z_\star\in\D$ such that $\psi(z_\star)\ne 0$. Letting $z=z_\star$ in \eqref{eq-202103262022}, we can observe
\begin{gather*}
    \dfrac{\psi'(u)}{(1-\psi(u)\overline{z_\star})^2}
    =\dfrac{\overline{\psi'(z_\star)}}{(1-u\overline{\psi(z_\star)})^2}.
\end{gather*}
After integrating with respect to the variable $u$, $\psi(\cdot)$ is of form
\begin{gather*}
    \psi(z)=\dfrac{Az+B}{Cz+D},
\end{gather*}
where $A,B,C,D$ are complex constants. There are two cases of the coefficient $C$. If $C=0$, then
\begin{gather*}
    \psi(z)=\widehat{A}z+\widehat{B},\quad\text{where $\widehat{A}=\dfrac{A}{D}$ and $\widehat{B}=\dfrac{B}{D}$}.
\end{gather*}
Through setting $\psi(z)=\widehat{A}z+\widehat{B}$ in \eqref{eq-202103262022} and then equating coefficients, we get
\begin{gather*}
    \widehat{A}\in\R,\quad\widehat{B}=0.
\end{gather*}
If $C\ne 0$, then
\begin{gather*}
    \psi(z)=G+\dfrac{E}{z+F},\quad\text{where $G=\dfrac{A}{C},E=\dfrac{BC-AD}{C^2},F=\dfrac{D}{C}$}.
\end{gather*}
It follows from $\psi(\cdot)\not\equiv\text{const}$, that $E\ne 0$. By way of substituting this form of $\psi(\cdot)$ back into \eqref{eq-202103262022} and then equating coefficients, we obtain
\begin{gather*}
    \dfrac{\overline{G}^2}{G^2}=\dfrac{\overline{E}}{E},
    \quad\dfrac{\overline{G}}{G}=\dfrac{\overline{F}}{F},
    \quad GF+E=-\dfrac{E\overline{G}}{\overline{E}G}.
\end{gather*}
Thus, \eqref{202208211011}-\eqref{202208211012} hold, where
\begin{gather*}
    \bfa=G+\dfrac{E}{F},\quad\bfb=-\dfrac{E}{F^2};
\end{gather*}
meanwhile, \eqref{202208211013} follows directly from Lemma \ref{lem-202106091438}.
\end{proof}

\begin{lem}\label{lem-202105202050}
Let $\tha\in\T$ and $\psi:\D\to\D$ be an analytic function with the property that
\begin{gather}\label{eq-202105202050}
    \psi'(y)(1-\tha y\psi(x))^2
    =\psi'(x)(1-\tha\psi(y)x)^2\quad\forall x,y\in\D.
\end{gather}
Then
\begin{gather*}
    \psi(z)=\al_0+\dfrac{\al_1\tha z}{1-\al_0\tha z},
\end{gather*}
where coefficients satisfy
\begin{gather*}
    |\al_0+\overline{\al_0}(\al_1-\al_0^2)|+|\al_1|\leq 1-|\al_0|^2.
\end{gather*}
\end{lem}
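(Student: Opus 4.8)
The plan is to remove the unimodular constant $\tha$ from \eqref{eq-202105202050} by a rotation of the variable, thereby reducing to the conjugation-free analogue of equation \eqref{eq-202103262022}, and then to rerun the argument of Lemma \ref{lem-202103262022}. Put $\phi(w)=\psi(\overline{\tha}\,w)$; since $|\tha|=1$ this is a well-defined analytic self-map of $\D$ with $\phi'(w)=\overline{\tha}\,\psi'(\overline{\tha}\,w)$, i.e. $\psi'(\overline{\tha}\,w)=\tha\,\phi'(w)$. Substituting $x=\overline{\tha}\,u$ and $y=\overline{\tha}\,v$ in \eqref{eq-202105202050} and using $\tha\overline{\tha}=1$, every occurrence of $\tha$ cancels and one obtains
\begin{gather*}
    \phi'(v)(1-v\phi(u))^2=\phi'(u)(1-\phi(v)u)^2\qquad\forall u,v\in\D,
\end{gather*}
which is precisely \eqref{eq-202103262022} with all complex conjugations deleted.

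I would then solve this equation for $\phi$ exactly as in the proof of Lemma \ref{lem-202103262022}. The constant case is trivial. If $\phi$ is non-constant, fix $u_\star\in\D\setminus\{0\}$ with $\phi(u_\star)\ne 0$; the quantities $1-u_\star\phi(v)$ and $1-\phi(u_\star)v$ are then nowhere zero on $\D$ (their vanishing would force $|\phi|\ge 1$, resp. $|v|\ge 1$), so after setting $u=u_\star$, dividing, and integrating in $v$ one finds that $1/(1-u_\star\phi(v))$ is a linear fractional function of $v$, whence $\phi$ itself is a linear fractional transformation $\phi(w)=(Aw+B)/(Cw+D)$ with $AD-BC\ne 0$. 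Plugging this back into the functional equation and clearing denominators produces an identity $P(u,v)^2=Q(u,v)^2$ in $\C[u,v]$; since this ring is a domain, $P=Q$ or $P=-Q$. If $C=0$ the resulting relation forces $\phi(w)=\widehat{A}w$; if $C\ne 0$, writing $\phi(w)=G+E/(w+F)$ with $E\ne 0$, the option $P=-Q$ is impossible (it would place a pole of $\phi$ at $0\in\D$), while $P=Q$ yields the single relation $FG+E=-1$. In either case (and in the constant case) a short residue/limit computation shows
\begin{gather*}
    \phi(w)=\al_0+\dfrac{\al_1 w}{1-\al_0 w},\qquad \al_0,\al_1\in\C,
\end{gather*}
with $(\al_0,\al_1)=(0,\widehat{A})$, $(-1/F,-E/F^2)$, or $(\text{const},0)$ respectively. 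Undoing the rotation gives $\psi(z)=\phi(\tha z)=\al_0+\dfrac{\al_1\tha z}{1-\al_0\tha z}$, which is the asserted form.

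For the coefficient inequality I would rewrite $\psi$ as the single linear fractional expression
\begin{gather*}
    \psi(z)=\dfrac{\al_0+(\al_1-\al_0^2)\tha z}{1-\al_0\tha z}
\end{gather*}
and invoke Lemma \ref{202202242145} with $a=(\al_1-\al_0^2)\tha$, $b=\al_0$, $c=-\al_0\tha$, $d=1$, so that $ad-bc=\al_1\tha$, $\,b\overline{d}-a\overline{c}=\al_0+\overline{\al_0}(\al_1-\al_0^2)$, and $|d|^2-|c|^2=1-|\al_0|^2$. When $\al_1\ne 0$ this gives $|\al_0+\overline{\al_0}(\al_1-\al_0^2)|+|\al_1|\le 1-|\al_0|^2$ at once; when $\al_1=0$ the map is the constant $\al_0\in\D$, and the inequality reads $|\al_0|(1-|\al_0|^2)\le 1-|\al_0|^2$, which holds since $|\al_0|<1$. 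The part needing the most care is the linear fractional coefficient comparison: one must correctly extract the relation $FG+E=-1$ from $P^2=Q^2$, rule out $P=-Q$, and then verify that $FG+E=-1$ is exactly the condition making $G+E/(w+F)$ equal to $\al_0+\al_1 w/(1-\al_0 w)$. This is elementary but somewhat fiddly, and it is essentially the only place where the reasoning differs in substance from — and is in fact simpler than — that of Lemma \ref{lem-202103262022}, since the absence of conjugations here leaves only one coefficient relation rather than three.
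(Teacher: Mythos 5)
Your proof is correct and follows essentially the same route as the paper's: after your (harmless) rotation $\phi(w)=\psi(\overline{\tha}\,w)$ normalizes away $\tha$, the remaining steps --- fixing one variable, integrating to obtain a linear fractional form, substituting back and comparing coefficients, then invoking Lemma \ref{202202242145} for the coefficient inequality --- are exactly the paper's argument. You are in fact slightly more careful than the paper, which lists the degenerate branch ($G=F=0$, $E=\overline{\tha}$, i.e.\ your $P=-Q$ case) without explicitly noting that it must be discarded because it is not a self-map of $\D$.
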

\begin{proof}
It is clear that the case when $\psi(\cdot)\equiv\text{const}$ verifies \eqref{eq-202105202050}. Next is to consider the remaining case $\psi(\cdot)\not\equiv \text{const}$. Let $z_\star\in\D$ such that $\psi(z_\star)\ne 0$. Letting $x=z_\star$ in \eqref{eq-202105202050}, we get
\begin{gather*}
    \dfrac{\psi'(y)}{(1-\tha\psi(y)z_\star)^2}
    =\dfrac{\psi'(z_\star)}{(1-\tha y\psi(z_\star))^2}.
\end{gather*}
By way of integrating with respect to the variable $y$, we observe $\psi(\cdot)$ is of form
\begin{gather*}
    \psi(z)=\dfrac{Az+B}{Cz+D},
\end{gather*}
where $A,B,C,D$ are complex constants. There are two cases of $C$. If $C=0$, then
\begin{gather*}
    \psi(z)=\widehat{A}z+\widehat{B},\quad\text{where $\widehat{A}=\dfrac{A}{D}$ and $\widehat{B}=\dfrac{B}{D}$}.
\end{gather*}
Through setting $\psi(z)=\widehat{A}z+\widehat{B}$ in \eqref{eq-202105202050} and then equating coefficients, we get $\widehat{B}=0$. Consider when $C\ne 0$ and then
\begin{gather*}
    \psi(z)=G+\dfrac{E}{z+F},\quad\text{where $G=\dfrac{A}{C},E=\dfrac{BC-AD}{C^2},F=\dfrac{D}{C}$}.
\end{gather*}
It follows from $\psi(\cdot)\not\equiv\text{const}$, that $E\ne 0$. By way of substituting this form of $\psi(\cdot)$ back into \eqref{eq-202105202050} and then equating coefficients, we obtain
\begin{gather*}
    \begin{cases}
    \text{either $G=F=0,E=\overline{\tha}$},\\
    \text{or $E+GF=-\overline{\tha}$}.
    \end{cases}
\end{gather*}
Thus, the conclusion holds, where
\begin{gather*}
    \al_0=G+\dfrac{E}{F},\quad\al_1=-\dfrac{E}{F^2}\overline{\tha}.
\end{gather*}
\end{proof}

\begin{lem}\label{lem-202105291538}
Let $p\in\D\setminus\{0\}$ and $\ome_p(\cdot)$ be the function given by \eqref{eq-omep}. If the function $\psi:\D\to\D$ satisfies
\begin{gather}\label{eq-202105270751}
    \ome_p'(y)\psi'(u)[1-u\ome_p(\psi(y))]^2
    =\ome_p'(\psi(y))\psi'(y)[1-\ome_p(y)\psi(u)]^2,
\end{gather}
then either
\begin{gather}\label{202208121547}
    \psi\equiv\text{const}
\end{gather}
or
\begin{gather}\label{202208121548}
    \psi(z)=G+\dfrac{E}{z+F},
\end{gather}
where coefficients satisfy
\begin{gather}
    \label{202208121549}p-G|p|^2=-F|p|^2+(GF+E)\overline{p},\\
    \label{202208121550}E\ne 0,\\
    \label{202208121551}|(GF+E)\overline{F}-G|+|E|\leq|F|^2-1.
\end{gather}
\end{lem}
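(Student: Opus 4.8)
The plan is to imitate the proofs of Lemmas~\ref{lem-202103262022} and~\ref{lem-202105202050}. Alternative \eqref{202208121547} clearly satisfies \eqref{eq-202105270751}, so assume $\psi\not\equiv\mathrm{const}$; then $\psi'$ and $\psi-p$ have discrete zero sets, so one may fix $y_\star\in\D$ with $\psi'(y_\star)\ne0$, $y_\star\ne p$ and $\psi(y_\star)\ne p$. Since $\omega_p$ is an automorphism of $\D$ with $\omega_p(p)=0$, this forces $\omega_p'(y_\star)\ne0$, $\omega_p(y_\star)\ne0$, $\omega_p(\psi(y_\star))\ne0$, and $1/\omega_p(y_\star),1/\omega_p(\psi(y_\star))\notin\overline{\D}$. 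Putting $y=y_\star$ in \eqref{eq-202105270751} and dividing by $\omega_p'(y_\star)$ gives
\begin{gather*}
    \frac{\psi'(u)}{[1-\omega_p(y_\star)\psi(u)]^{2}}
    =\frac{\omega_p'(\psi(y_\star))\,\psi'(y_\star)}{\omega_p'(y_\star)}\cdot\frac{1}{[1-u\,\omega_p(\psi(y_\star))]^{2}},
\end{gather*}
whose left side is the $u$-derivative of $\bigl(\omega_p(y_\star)[1-\omega_p(y_\star)\psi(u)]\bigr)^{-1}$ and whose right side is a constant multiple of the $u$-derivative of $[1-u\,\omega_p(\psi(y_\star))]^{-1}$. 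Integrating in $u$, solving for $\psi(u)$ and using that the reciprocal of a linear fractional map is again linear fractional, I conclude $\psi(z)=\tfrac{Az+B}{Cz+D}$ with $AD-BC\ne0$ (non-degeneracy being forced by $\psi\not\equiv\mathrm{const}$).

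Split on $C$. If $C\ne0$, normalise, as in Lemma~\ref{lem-202105202050}, to $\psi(z)=G+\tfrac{E}{z+F}$ with $G=A/C$, $E=(BC-AD)/C^{2}\ne0$, $F=D/C$; this already yields \eqref{202208121548} and \eqref{202208121550}. Substituting this form into \eqref{eq-202105270751}, writing $\omega_p'(z)=c_0(1-\overline p z)^{-2}$ with $c_0=\overline p(|p|^{2}-1)/p$, clearing the polynomial denominators, and equating coefficients of the monomials in $u,y$ should produce a system whose only independent consequence is the single linear relation \eqref{202208121549} among $G,E,F,p$. Finally, \eqref{202208121551} is just the hypothesis $\psi(\D)\subseteq\D$ made explicit: writing $\psi(z)=\tfrac{Gz+(GF+E)}{z+F}$, which has non-zero determinant $-E$, and applying Lemma~\ref{202202242145} with $(a,b,c,d)=(G,\,GF+E,\,1,\,F)$ gives precisely $|(GF+E)\overline F-G|+|E|\le|F|^{2}-1$.

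The step I expect to be the main obstacle is $C=0$, i.e. $\psi(z)=\widehat Az+\widehat B$ with $\widehat A\ne0$. Substituting into \eqref{eq-202105270751}, again with $\omega_p'(z)=c_0(1-\overline p z)^{-2}$, a short computation shows that after cancelling a common nowhere-zero factor the identity reduces to $N(u,y)^{2}=N(y,u)^{2}$ for an explicit polynomial $N$ that is affine in each of $u$ and $y$; hence $N(u,y)=\pm N(y,u)$ identically. The minus sign forces the $uy$-coefficient of $N$ (a non-zero multiple of $\widehat A$) to vanish, which is impossible, while the plus sign only imposes one linear relation between $\widehat B,\widehat A$ and $p$ (namely $\widehat B=p(1-\widehat A)$, so that $\psi$ fixes $p$). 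Thus this branch leaves a residual one-parameter family of affine self-maps, and the genuine work is to reconcile it with the stated dichotomy — either it is excluded by the context in which the lemma is used, so that only \eqref{202208121547} survives here, or it must be added as a further alternative. The coefficient bookkeeping itself is routine; this reconciliation is where care is needed.
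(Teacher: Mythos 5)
Your argument follows the paper's route exactly: pin down $\psi$ as a linear fractional map by freezing one variable and integrating, then substitute back and compare the two sides as squares of polynomials that are affine in each of $u$ and $y$. The obstacle you single out at the end is not a defect of your write-up but a genuine flaw in the lemma. Writing $N(u,y)=(\overline{p}A-|p|^2C)uy+(\overline{p}B-|p|^2D)u+(pC-|p|^2A)y+pD-|p|^2B$, equation \eqref{eq-202105270751} for a linear fractional $\psi=(Az+B)/(Cz+D)$ is \emph{equivalent} to $N(u,y)^2=N(y,u)^2$, and the paper resolves the branch $N(u,y)=N(y,u)$ by immediately setting $G=A/C$, $F=D/C$, $E=(BC-AD)/C^2$ --- i.e.\ it tacitly assumes $C\ne0$ and never revisits the affine case, even though Lemmas \ref{lem-202103262022} and \ref{lem-202105202050} do treat $C=0$ separately. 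Your computation that $C=0$ leaves the family $\psi(z)=\widehat{A}z+p(1-\widehat{A})$ (the affine self-maps of $\D$ fixing $p$) is correct, and these maps genuinely satisfy \eqref{eq-202105270751}: for the simplest instance $\psi(z)=z$ the two sides of \eqref{eq-202105270751} are literally identical, yet $\psi$ is neither constant nor of the form \eqref{202208121548}. So the stated dichotomy is incomplete and the affine alternative must be added; this is not something that can be reconciled away, and it propagates to the results that invoke the lemma (for example $W_{f,g}=cI$, i.e.\ $f\equiv c$ and $g=\mathrm{id}$, is $\calc_{\bfp,\bfq}$-symmetric for every conjugation but is not captured by the forms the lemma produces when $U_1\ne\emptyset$).

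Two smaller points. In the $C\ne0$ case you assert that equating coefficients yields ``only'' the single relation \eqref{202208121549}; to make this precise you should, as you do for $C=0$, factor $N(u,y)^2-N(y,u)^2$ and use that $\C[u,y]$ is an integral domain to get $N(u,y)=\pm N(y,u)$. The plus branch gives exactly \eqref{202208121549}; the minus branch with $C\ne0$ forces $\psi(z)=(|p|^2z-p)/(\overline{p}z-|p|^2)$, which has a pole at $z=p\in\D$ and is discarded by Lemma \ref{202202242145}. Finally, in the integration step you need $AD-BC\ne0$ in order to cancel it from both sides before comparing $N(u,y)^2$ with $N(y,u)^2$; you correctly note that this follows from $\psi\not\equiv\mathrm{const}$.
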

\begin{proof}
It is clear that the case when $\psi(\cdot)\equiv\text{const}$ verifies \eqref{eq-202105270751}. Next is to consider the remaining case $\psi(\cdot)\not\equiv \text{const}$. Since $\ome_p'(z)=\frac{\overline{p}}{p}\cdot\frac{|p|^2-1}{(1-\overline{p}z)^2}$, equation \eqref{eq-202105270751} is reduced to the following
\begin{gather}\label{eq-202105270808}
    \dfrac{\psi'(u)[1-u\ome_p(\psi(y))]^2}{(1-\overline{p}y)^2}
    =\dfrac{\psi'(y)[1-\ome_p(y)\psi(u)]^2}{(1-\overline{p}\psi(y))^2}
\end{gather}
or equivalently to saying that
\begin{gather*}
    \dfrac{\psi'(u)}{[1-\ome_p(y)\psi(u)]^2}
    =\dfrac{\psi'(y)}{(1-\overline{p}\psi(y))^2}\cdot\dfrac{(1-\overline{p}y)^2}{[1-u\ome_p(\psi(y))]^2}.
\end{gather*}
Like the arguments in Lemmas \ref{lem-202105202050} and \ref{lem-202103262022}, $\psi(\cdot)$ is of form
\begin{gather}\label{eq-202105270818}
    \psi(z)=\dfrac{Az+B}{Cz+D},
\end{gather}
where $A,B,C,D$ are complex constants. Setting \eqref{eq-omep} in \eqref{eq-202105270808}, we find
\begin{gather*}
    \psi'(u)[p(1-\overline{p}\psi(y))-\overline{p}u(p-\psi(y))]^2
    =\psi'(y)[p(1-\overline{p}y)-\overline{p}(p-y)\psi(u)]^2,
\end{gather*}
which implies, by \eqref{eq-202105270818}, that
\begin{gather*}
    [(\overline{p}A-|p|^2C)uy+(\overline{p}B-|p|^2D)u+(pC-|p|^2A)y+pD-|p|^2B]^2\\
    =[(\overline{p}A-|p|^2C)uy+(pC-|p|^2A)u+(\overline{p}B-|p|^2D)y+pD-|p|^2B]^2.
\end{gather*}
There are two cases.

- If
\begin{gather*}
    (\overline{p}A-|p|^2C)uy+(\overline{p}B-|p|^2D)u+(pC-|p|^2A)y+pD-|p|^2B\\
    =(\overline{p}A-|p|^2C)uy+(pC-|p|^2A)u+(\overline{p}B-|p|^2D)y+pD-|p|^2B,
\end{gather*}
then after equating coefficients, we get $\overline{p}B-|p|^2D=pC-|p|^2A$. It then implies \eqref{202208121549}, where
\begin{gather*}
    G=\dfrac{A}{C},\quad F=\dfrac{D}{C},\quad E=\dfrac{BC-AD}{C^2}.
\end{gather*}
Note that condition \eqref{202208121551} follows directly from Lemma \ref{202202242145}.

- If
\begin{gather*}
    (\overline{p}A-|p|^2C)uy+(\overline{p}B-|p|^2D)u+(pC-|p|^2A)y+pD-|p|^2B\\
    =-(\overline{p}A-|p|^2C)uy-(pC-|p|^2A)u-(\overline{p}B-|p|^2D)y-pD+|p|^2B,
\end{gather*}
then after equating coefficients, we get
\begin{gather*}
    \begin{cases}
    \overline{p}A-|p|^2C=0,\\
    \overline{p}B-|p|^2D=-(pC-|p|^2A),\\
    pD-|p|^2B=0,
    \end{cases}
    \Longrightarrow
    \begin{cases}
    A=-\overline{p}B,\\
    D=\overline{p}B,\\
    C=-\dfrac{\overline{p}}{p}B.
    \end{cases}
\end{gather*}
Thus, this case gives
\begin{gather*}
    \psi(z)=\dfrac{|p|^2z-p}{\overline{p}z-|p|^2};
\end{gather*}
but this is impossible as $\psi$ is not a self-mapping of $\D$ (see Lemma \ref{202202242145}).
\end{proof}

\subsection{Basic properties of $W_{f,g}$}
The closed graph of the maximal operator $W_{f,g,\max}$ is left to the reader as its proof is similar to those used in \cite{zbMATH07216720}.

\begin{prop}
The maximal operator $W_{f,g,\max}$ is closed.
\end{prop}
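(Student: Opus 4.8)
The plan is to invoke the closed graph theorem in the form: an operator $X$ on a Hilbert space is closed iff whenever $\xi_n\to\xi$ in $\calh$ and $X\xi_n\to\eta$ in $\calh$, one has $\xi\in\text{dom}(X)$ and $X\xi=\eta$. So I would start with a sequence $\xi_n(\cdot)\in\text{dom}(W_{f,g,\max})$ such that $\xi_n\to\xi$ in $\spc$ and $W_{f,g,\max}\xi_n=\widetilde{[f,g]}\xi_n\to\eta$ in $\spc$, and I must show $\xi\in\text{dom}(W_{f,g,\max})$, i.e. $\widetilde{[f,g]}\xi\in\spc$, together with $\widetilde{[f,g]}\xi=\eta$.

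The key observation is that norm convergence in $\spc$ implies pointwise convergence on $\dom$, and in fact locally uniform convergence, because for each fixed $u\in\dom$ evaluation $\xi\mapsto\xi(u)=\inner{\xi}{K_u}$ is a bounded linear functional (the reproducing kernel $K_u$ given explicitly in Section~\ref{sec2}), so $|\xi_n(u)-\xi(u)|\leq\|\xi_n-\xi\|\,\|K_u\|\to 0$, and a standard argument upgrades this to uniform convergence on compact subsets. Consequently $\xi_n(g(z))\to\xi(g(z))$ for each $z\in\dom$, and hence $f(z)\xi_n(g(z))\to f(z)\xi(g(z))$ pointwise on $\dom$; that is, $\widetilde{[f,g]}\xi_n\to\widetilde{[f,g]}\xi$ pointwise. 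On the other hand $\widetilde{[f,g]}\xi_n\to\eta$ in norm, hence also pointwise by the same reproducing-kernel argument. Since pointwise limits are unique, $\widetilde{[f,g]}\xi=\eta$ on $\dom$. In particular $\widetilde{[f,g]}\xi=\eta\in\spc$, so $\xi\in\text{dom}(W_{f,g,\max})$ and $W_{f,g,\max}\xi=\eta$, which is exactly what the closed graph criterion demands.

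I expect the only genuinely delicate point to be the passage from norm convergence to locally uniform convergence, used to guarantee that $\xi_n\circ g$ converges pointwise to $\xi\circ g$; but this is entirely standard for reproducing kernel Hilbert spaces of analytic functions, since $\|K_u\|^2=K_u(u)$ is locally bounded in $u$, so the pointwise estimate $|\xi_n(u)-\xi(u)|\leq\|\xi_n-\xi\|\sqrt{K_u(u)}$ is uniform on compacta. Everything else is formal. As noted in the excerpt, the argument parallels the one in \cite{zbMATH07216720}, so I would simply record the statement and refer there, or supply the short sketch above.
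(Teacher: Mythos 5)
Your argument is correct and is exactly the standard reproducing-kernel argument that the paper has in mind when it defers the proof to \cite{zbMATH07216720}: norm convergence implies pointwise convergence via $|\xi_n(u)-\xi(u)|\leq\|\xi_n-\xi\|\,\|K_u\|$, so both $\widetilde{[f,g]}\xi_n\to\widetilde{[f,g]}\xi$ and $\widetilde{[f,g]}\xi_n\to\eta$ pointwise, forcing $\widetilde{[f,g]}\xi=\eta\in\spc$. One terminological quibble: the sequential criterion you state is the \emph{definition} of a closed operator, not the closed graph theorem (which is the converse implication for everywhere-defined operators); also, you only need pointwise convergence at the single point $g(z)$, so the locally uniform upgrade is not actually required.
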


Consequently, we get a criterion for the boundedness of the maximal operator $W_{f,g,\max}$.
\begin{cor}\label{cor-bdd}
The maximal operator $W_{f,g,\max}$ is bounded if and only if $\text{dom}(W_{f,g,\max})=\spc$.
\end{cor}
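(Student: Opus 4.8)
The plan is to deduce this corollary directly from the preceding proposition (closedness of $W_{f,g,\max}$) together with the closed graph theorem. First I would recall the elementary observation that the forward implication is trivial: by the very definition of \emph{bounded weighted composition operator} given in the introduction, boundedness of $W_{f,g,\max}$ requires in particular condition (i), namely $\text{dom}(W_{f,g,\max})=\spc$, so there is nothing to prove in that direction.

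For the reverse implication, suppose $\text{dom}(W_{f,g,\max})=\spc$. Then $W_{f,g,\max}$ is an everywhere-defined linear operator on the Hilbert space $\spc$. By the proposition just stated, it is closed, i.e.\ its graph is a closed subspace of $\spc\times\spc$. The closed graph theorem for Banach (hence Hilbert) spaces now applies verbatim: a closed linear operator defined on all of a Banach space is bounded. Hence there exists $R>0$ with $\|W_{f,g,\max}h\|\le R\|h\|$ for all $h(\cdot)\in\spc$, which is precisely condition (ii), so $W_{f,g,\max}$ is a bounded weighted composition operator. This completes the equivalence.

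There is essentially no obstacle here: the corollary is a one-line consequence once closedness is in hand, and the only point worth spelling out is that the phrase ``$W_{f,g,\max}$ is bounded'' unpacks, via the definitions in Section~1, into the two clauses (i) and (ii), with clause (i) being exactly the domain condition in the statement. If one wished to be slightly more self-contained, one could add a sentence noting that $\spc$ is complete (being a reproducing-kernel Hilbert space of analytic functions on $\dom$, which follows from the norm-to-pointwise estimate implicit in the existence of the reproducing kernel $K_z$, or from Lemma~\ref{lem-202106091438} applied with $t=0$), so that the closed graph theorem genuinely applies. I would keep the write-up to a few lines and cite the closed graph theorem rather than reproving it.

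\begin{proof}
If $W_{f,g,\max}$ is bounded, then by definition $\text{dom}(W_{f,g,\max})=\spc$.

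Conversely, assume $\text{dom}(W_{f,g,\max})=\spc$. Since $\spc$ is a Hilbert space and, by the preceding proposition, $W_{f,g,\max}$ is a closed linear operator whose domain is all of $\spc$, the closed graph theorem yields a constant $R>0$ such that $\|W_{f,g,\max}h\|\leq R\|h\|$ for every $h(\cdot)\in\spc$. Together with $\text{dom}(W_{f,g,\max})=\spc$, this shows that $W_{f,g,\max}$ is bounded.
\end{proof}
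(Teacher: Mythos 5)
Your proof is correct and is exactly the argument the paper intends: the corollary is stated as an immediate consequence of the closedness of $W_{f,g,\max}$ via the closed graph theorem, with the forward direction being definitional. No issues.
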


The following lemma will be used frequently to prove the main results as it shows that kernel functions always belong to the domain of $W_{f,g,\max}^*$.
\begin{lem}\label{lem-W*Kz}
Suppose that the operator $W_{f,g}$ is densely defined. Then equality $W_{f,g}^*K_z=\overline{f(z)}K_{g(z)}$ holds for every $z\in\dom$.
\end{lem}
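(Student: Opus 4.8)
The plan is to exploit the reproducing property of the kernel functions $K_z$ together with the definition of the adjoint of an unbounded operator. Recall that for a densely defined operator $W_{f,g}$, a vector $v$ belongs to $\mathrm{dom}(W_{f,g}^*)$ precisely when the linear functional $h\mapsto\inner{W_{f,g}h}{v}$ is bounded on $\mathrm{dom}(W_{f,g})$, and in that case $W_{f,g}^*v$ is the unique element representing this functional. So the first step is to fix $z\in\dom$ and compute, for an arbitrary $h\in\mathrm{dom}(W_{f,g})$,
\begin{gather*}
    \inner{W_{f,g}h}{K_z}=\inner{f\cdot(h\circ g)}{K_z}=(W_{f,g}h)(z)=f(z)\,h(g(z)).
\end{gather*}

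Next I would rewrite the right-hand side using the reproducing property again, this time at the point $g(z)\in\dom$: since $h(g(z))=\inner{h}{K_{g(z)}}$, we obtain
\begin{gather*}
    \inner{W_{f,g}h}{K_z}=f(z)\inner{h}{K_{g(z)}}=\inner{h}{\overline{f(z)}\,K_{g(z)}}.
\end{gather*}
This identity holds for every $h\in\mathrm{dom}(W_{f,g})$, which is dense in $\spc$. The functional $h\mapsto\inner{h}{\overline{f(z)}K_{g(z)}}$ is manifestly bounded (its norm is $|f(z)|\,\|K_{g(z)}\|$), so by definition $K_z\in\mathrm{dom}(W_{f,g}^*)$ and $W_{f,g}^*K_z=\overline{f(z)}K_{g(z)}$, which is exactly the claimed formula.

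I do not anticipate a serious obstacle here; the only point requiring mild care is the justification that $g(z)$ genuinely lies in $\dom$ so that $K_{g(z)}$ is a well-defined element of $\spc$ — this is part of the standing assumption that $g:\dom\to\dom$. A second small point worth stating explicitly is that the adjoint $W_{f,g}^*$ is well-defined in the first place, which is guaranteed by the hypothesis that $W_{f,g}$ is densely defined. Beyond these routine remarks, the argument is a two-line application of the reproducing kernel identity, and I would present it essentially as above.
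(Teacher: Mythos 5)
Your argument is correct and is exactly the standard (and intended) one: the paper states this lemma without proof, and the computation $\inner{W_{f,g}h}{K_z}=f(z)h(g(z))=\inner{h}{\overline{f(z)}K_{g(z)}}$ for $h\in\mathrm{dom}(W_{f,g})$, combined with the definition of the adjoint of a densely defined operator, is precisely what is needed. Your two cautionary remarks (that $g(z)\in\dom$ so $K_{g(z)}$ makes sense, and that density is what makes $W_{f,g}^*$ well defined) are the right ones; nothing further is required.
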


We take a while to focus on the very restrictive category of operators induced by the following functions
\begin{gather}
    \varphi_\tha(z)
    =\left(\dfrac{\tha_1-z_1}{1-z_1\overline{\tha_1}},\dfrac{\tha_2-z_2}{1-z_2\overline{\tha_2}},
    \cdots,\dfrac{\tha_d-z_d}{1-z_d\overline{\tha_d}}\right)
\end{gather}
and
\begin{gather}\label{form-psi-tha}
    \psi_\tha(z)=K_{\tha}(z)/\|K_{\tha}\|.
\end{gather}

\begin{prop}\label{prop-202105032140}
Let $\tha\in\dom$. Let $\cala_{\tha,\max}$ be the maximal operator generated by $\widetilde{[\psi_\tha,\varphi_\tha]}$. Then the operator $\cala_{\tha,\max}$ is unitary on $\spc$.
\end{prop}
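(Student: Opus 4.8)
The plan is to compute the explicit form of the weight $\psi_\tha$ and of $\varphi_\tha$, and then to establish two facts: (i) $\cala_{\tha,\max}$ is everywhere defined on $\spc$ and isometric, and (ii) $\cala_{\tha,\max}^2=I$. Granting these, $\cala_{\tha,\max}$ is a bounded isometry that is its own two-sided inverse, hence a surjective isometry, hence unitary. First I would record that, since $\|K_\tha\|^2=K_\tha(\tha)=\prod_{j=1}^d(1-|\tha_j|^2)^{-(\ell_j+2)}$, one has
\[
    \psi_\tha(z)=\prod_{j=1}^d\frac{(1-|\tha_j|^2)^{(\ell_j+2)/2}}{(1-z_j\overline{\tha_j})^{\ell_j+2}},
\]
and that $\varphi_\tha$ acts coordinatewise through the involutive disk automorphisms $\varphi_{\tha_j}(z_j)=\frac{\tha_j-z_j}{1-z_j\overline{\tha_j}}$, so that $\varphi_\tha\circ\varphi_\tha=\mathrm{id}_{\dom}$ and $\varphi_\tha$ is a biholomorphic self-bijection of $\dom$.

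For (i) I would invoke the standard one-variable identities
\[
    1-|\varphi_{\tha_j}(z_j)|^2=\frac{(1-|\tha_j|^2)(1-|z_j|^2)}{|1-z_j\overline{\tha_j}|^2},\qquad
    |\varphi_{\tha_j}'(z_j)|^2=\frac{(1-|\tha_j|^2)^2}{|1-z_j\overline{\tha_j}|^4},
\]
which combine, with $w_j=\varphi_{\tha_j}(z_j)$, into
\[
    (1-|w_j|^2)^{\ell_j}\,dA(w_j)=\frac{(1-|\tha_j|^2)^{\ell_j+2}}{|1-z_j\overline{\tha_j}|^{2\ell_j+4}}\,(1-|z_j|^2)^{\ell_j}\,dA(z_j);
\]
the right-hand factor is exactly the modulus squared of the $j$-th factor of $\psi_\tha$. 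Taking the product over $j\in\zd$, applying Fubini and the change of variables $w=\varphi_\tha(z)$ (legitimate since $\varphi_\tha$ is a self-bijection of $\dom$ with the indicated Jacobian), I get, for every analytic $\xi:\dom\to\C$,
\[
    \int_{\dom}|\xi(w)|^2\prod_{j=1}^d(1+\ell_j)(1-|w_j|^2)^{\ell_j}\,dA(w_j)
    =\int_{\dom}|\psi_\tha(z)|^2|\xi(\varphi_\tha(z))|^2\prod_{j=1}^d(1+\ell_j)(1-|z_j|^2)^{\ell_j}\,dA(z_j),
\]
that is, $\|\widetilde{[\psi_\tha,\varphi_\tha]}\xi\|=\|\xi\|$ as an identity in $[0,\infty]$. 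Hence $\xi\in\spc$ forces $\widetilde{[\psi_\tha,\varphi_\tha]}\xi\in\spc$, so $\text{dom}(\cala_{\tha,\max})=\spc$; by Corollary~\ref{cor-bdd} the operator is bounded, and the displayed identity says it is an isometry.

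For (ii) I would compute $1-\varphi_{\tha_j}(z_j)\overline{\tha_j}=\dfrac{1-|\tha_j|^2}{1-z_j\overline{\tha_j}}$, whence
\[
    \psi_\tha(\varphi_\tha(z))=\prod_{j=1}^d\frac{(1-z_j\overline{\tha_j})^{\ell_j+2}}{(1-|\tha_j|^2)^{(\ell_j+2)/2}},
\]
so that $\psi_\tha(z)\,\psi_\tha(\varphi_\tha(z))\equiv 1$ on $\dom$. Since $\varphi_\tha\circ\varphi_\tha=\mathrm{id}$, for $\xi\in\spc$ one gets $(\cala_{\tha,\max}^2\xi)(z)=\psi_\tha(z)\,\psi_\tha(\varphi_\tha(z))\,\xi(\varphi_\tha(\varphi_\tha(z)))=\xi(z)$, i.e. $\cala_{\tha,\max}^2=I$. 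Combining (i) and (ii): $\cala_{\tha,\max}$ is injective (being an isometry) and surjective (as $\cala_{\tha,\max}\cdot\cala_{\tha,\max}=I$), hence a surjective isometry of $\spc$; by polarization $\cala_{\tha,\max}^*\cala_{\tha,\max}=I$, and surjectivity then yields $\cala_{\tha,\max}\cala_{\tha,\max}^*=I$, so $\cala_{\tha,\max}$ is unitary (indeed a self-adjoint unitary).

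The only real work is the factor-by-factor bookkeeping in step (i): after multiplying over the $d$ coordinates, the Jacobian-times-weight produced by the change of variables must match $|\psi_\tha|^2$ exactly, and this is where the specific exponent $\ell_j+2$ in the reproducing kernel is used; once the one-variable identities above are in hand the matching is routine, and the Fubini/one-variable-substitution argument requires nothing more than that $\varphi_\tha$ preserves $\dom$. As an alternative to the last step, surjectivity can also be extracted from Lemma~\ref{lem-W*Kz}, which gives $\cala_{\tha,\max}^*K_z=\overline{\psi_\tha(z)}K_{\varphi_\tha(z)}$ and hence that the range of $\cala_{\tha,\max}$ contains every reproducing kernel.
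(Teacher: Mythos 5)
Your proposal is correct, and the first half (everywhere-definedness plus isometry via the change of variables $w=\varphi_\tha(z)$, with the Jacobian and weight factors matching $|\psi_\tha|^2$) is exactly the paper's argument, just with the one-variable identities written out explicitly. Where you diverge is in establishing surjectivity: the paper applies Lemma~\ref{lem-W*Kz} to compute $\cala_{\tha,\max}\cala_{\tha,\max}^*K_z(y)=\overline{\psi_\tha(z)}\psi_\tha(y)K_{\varphi_\tha(z)}(\varphi_\tha(y))=K_z(y)$ and then uses density of the span of kernels, whereas you observe that $\psi_\tha(z)\,\psi_\tha(\varphi_\tha(z))\equiv 1$ and $\varphi_\tha\circ\varphi_\tha=\mathrm{id}$ force $\cala_{\tha,\max}^2=I$, so the operator is its own inverse and surjectivity is immediate. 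Your route is slightly more elementary (no adjoint computation, no density argument) and yields the extra information that $\cala_{\tha,\max}$ is a self-adjoint unitary; the paper's route via $W^*K_z=\overline{f(z)}K_{g(z)}$ is the one that generalizes to the non-involutive weighted composition operators treated in the rest of Section~4, which is presumably why it is the one recorded there. Both arguments are complete; no gaps.
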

\begin{proof}
First, we show that $\text{dom}(\cala_{\tha,\max})=\spc$ and $\|\cala_{\tha,\max}h\|=\|h\|$ for every $h\in\spc$. Indeed, for $h\in\spc$, we consider
\begin{gather*}
I=\int\limits_{\dom}|\widetilde{[\psi_\tha,\varphi_\tha]}h(z)|^2\prod_{j=1}^d(1+\ell_j)(1-|z_j|^2)^{\ell_j}\,dA(z_j),
\end{gather*}
which implies, after doing the change of variables $x_j=\frac{\theta_j-z_j}{1-\overline{\theta_j}z_j}$, that
\begin{gather*}
    I
    =\int\limits_{\dom}\prod_{j=1}^d|h(x)|^2\prod_{j=1}^d(1+\ell_j)(1-|x_j|^2)^{\ell_j}\,dA(x_j),
\end{gather*}
as desired. Next, by Lemma \ref{lem-W*Kz}, we have
\begin{gather*}
    \cala_{\tha,\max}\cala_{\tha,\max}^*K_z(y)=\overline{\psi_\tha(z)}\psi_\tha(y)K_{\varphi_\tha(z)}(\varphi_\tha(y))
    =K_z(y),
\end{gather*}
which implies, as the linear span of kernel functions is dense, that $\cala_{\tha,\max}^*\cala_{\tha,\max}=I$.
\end{proof}

\subsection{Conjugations}
Let $\{U_1,U_2\}$ be a partition of $\zd$, that is
\begin{gather}\label{cond-UV}
    U_1\cup U_2=\zd,\quad U_1\cap U_2=\emptyset.
\end{gather}
Let $\bfp=(\bfp_1,\bfp_2,\cdots,\bfp_{|U_1|})\in\D^{|U_1|}\setminus\{0\}$ and $\bfq=(\bfq_1,\bfq_2.\cdots,\bfq_{|U_2|})\in\T^{|U_2|}$. Consider the operator
\begin{gather}
    \calc_{\bfp,\bfq}h(z)=\vartheta_{\bfp,U}(z)\overline{h(\overline{\ome(z)})},
\end{gather}
where $\vartheta_{\bfp,U}:\dom\to\C,\ome=(\ome_1,\ome_2,\cdots,\ome_d):\dom\to\dom$ are given by
\begin{gather}
    \vartheta_{\bfp,U}(z)
    =\prod_{j\in U_1}\dfrac{(1-|\bfp_j|^2)^{1+\ell_j/2}}{(1-\overline{\bfp_j}z_j)^{\ell_j+2}},\\
    \ome_j(z_j)=
    \begin{cases}
    \dfrac{\overline{\bfp_j}}{\bfp_j}\cdot\dfrac{\bfp_j-z_j}{1-\overline{\bfp_j}z_j}
    ,\quad\text{if $j\in U_1$},\\
    \\
    \bfq_j z_j,\quad\text{if $j\in U_2$}.
    \end{cases}
\end{gather}

\begin{lem}\label{lem-Cq*Kz}
The operator $\calc_{\bfp,\bfq}$ is a conjugation and moreover it satisfies $\calc_{\bfp,\bfq}K_z=\vartheta_{\bfp,U}(z)K_{\overline{\ome(z)}}$.
\end{lem}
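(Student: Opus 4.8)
The plan is to verify the two assertions separately: first that $\calc_{\bfp,\bfq}$ is a conjugation (conjugate-linear, isometric, involutive), and then the action on kernel functions. For the conjugation part, I would exploit the product structure over the partition $\{U_1,U_2\}$: the map $\ome$ acts coordinatewise, with each $\ome_j$ ($j\in U_1$) a disk automorphism of the form \eqref{eq-omep} and each $\ome_j$ ($j\in U_2$) a rotation $z_j\mapsto\bfq_j z_j$ with $\bfq_j\in\T$; in both cases $\ome_j$ is an involutive self-map of $\D$ when composed with conjugation, i.e. $\overline{\ome_j(\overline{\ome_j(\overline{w})})}=w$, which is the identity $\ome_p\circ\ome_p=\mathrm{id}$ for the Blaschke-type factor and $\bfq_j\overline{\bfq_j}=1$ for the rotation. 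Conjugate-linearity of $\calc_{\bfp,\bfq}$ is immediate from the complex conjugation on $h$. For the involution property, I would compute $\calc_{\bfp,\bfq}^2 h(z)=\vartheta_{\bfp,U}(z)\overline{\vartheta_{\bfp,U}(\overline{\ome(z)})}\,h(z)$ and check that $\vartheta_{\bfp,U}(z)\overline{\vartheta_{\bfp,U}(\overline{\ome(z)})}=1$; this reduces, coordinate by coordinate over $j\in U_1$, to the identity $\tfrac{(1-|\bfp_j|^2)^{1+\ell_j/2}}{(1-\overline{\bfp_j}z_j)^{\ell_j+2}}\cdot\overline{\left(\tfrac{(1-|\bfp_j|^2)^{1+\ell_j/2}}{(1-\overline{\bfp_j}\,\overline{\ome_j(\overline{z_j})})^{\ell_j+2}}\right)}=1$, which follows from the standard cocycle identity $1-\overline{\bfp_j}\,\overline{\ome_j(\overline{z_j})}=\tfrac{(1-|\bfp_j|^2)}{1-\bfp_j\overline{z_j}}$ after taking conjugates (there is no contribution from $j\in U_2$ since $\vartheta_{\bfp,U}$ only involves $U_1$-coordinates).

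For isometry, I would verify $\norm{\calc_{\bfp,\bfq}h}=\norm{h}$ by a change of variables in the defining integral. Writing out $\norm{\calc_{\bfp,\bfq}h}^2=\int_{\dom}|\vartheta_{\bfp,U}(z)|^2|h(\overline{\ome(z)})|^2\prod_j(1+\ell_j)(1-|z_j|^2)^{\ell_j}dA(z_j)$, I substitute $x_j=\overline{\ome_j(z_j)}$ coordinatewise. For $j\in U_2$ this is a rotation and the measure is invariant; for $j\in U_1$ the real Jacobian of the (anti)automorphism together with the factor $|\vartheta_{\bfp,U}|^2$ must collapse the weight $(1-|z_j|^2)^{\ell_j}$ to $(1-|x_j|^2)^{\ell_j}$, which is precisely the computation already performed in the proof of Proposition \ref{prop-202105032140} for $\varphi_\tha$ and $\psi_\tha$; indeed $|\vartheta_{\bfp,U}(z)|^2$ plays the role of $|\psi_\tha|^2$ and $\ome$ the role of $\varphi_\tha$. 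So this step is a coordinatewise repetition of the unitarity argument. Since $\calc_{\bfp,\bfq}$ is conjugate-linear, isometric, and involutive, it is a conjugation.

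For the kernel identity, I would compute directly: $\calc_{\bfp,\bfq}K_z(u)=\vartheta_{\bfp,U}(u)\overline{K_z(\overline{\ome(u)})}=\vartheta_{\bfp,U}(u)\overline{\prod_j(1-\overline{\ome_j(u_j)}\cdot\overline{z_j})^{-(\ell_j+2)}}=\vartheta_{\bfp,U}(u)\prod_j(1-\ome_j(u_j)z_j)^{-(\ell_j+2)}$. I then need to recognize the right-hand side as $\vartheta_{\bfp,U}(z)K_{\overline{\ome(z)}}(u)=\vartheta_{\bfp,U}(z)\prod_j(1-u_j\overline{\overline{\ome_j(z_j)}})^{-(\ell_j+2)}=\vartheta_{\bfp,U}(z)\prod_j(1-u_j\ome_j(z_j))^{-(\ell_j+2)}$. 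Matching the two, I must show $\vartheta_{\bfp,U}(u)\prod_j(1-\ome_j(u_j)z_j)^{-(\ell_j+2)}=\vartheta_{\bfp,U}(z)\prod_j(1-u_j\ome_j(z_j))^{-(\ell_j+2)}$, i.e. a symmetry in $z$ and $u$. For $j\in U_2$ both sides have $(1-\bfq_j u_j z_j)^{-(\ell_j+2)}$ and agree trivially. For $j\in U_1$, expanding $1-\ome_{\bfp_j}(u_j)z_j$ using \eqref{eq-omep} and clearing denominators gives a factor that, together with the $\vartheta$-terms $(1-\overline{\bfp_j}u_j)^{-(\ell_j+2)}$ on the left versus $(1-\overline{\bfp_j}z_j)^{-(\ell_j+2)}$ on the right, yields the required symmetric expression; this is the same type of algebraic identity underlying equation \eqref{eq-202103262022} and Lemma \ref{lem-202103262022}.

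The main obstacle is bookkeeping rather than conceptual: one must carefully track the exponents $\ell_j+2$ versus $1+\ell_j/2$ and the conjugations so that the $\vartheta$-factors exactly cancel the excess in the kernel product, and ensure the change of variables Jacobian (a factor $(1-|\bfp_j|^2)^2/|1-\overline{\bfp_j}z_j|^4$ for each $j\in U_1$) combines correctly with $|\vartheta_{\bfp,U}|^2$. Once the single-variable Blaschke cocycle identity $1-\overline{\bfp}\,\ome_{\bfp}(w)\,=\,\overline{(1-\bfp\overline{w})}\cdot\tfrac{1-|\bfp|^2}{|1-\bfp\overline{w}|^2}$-type relations are in hand, everything factors over coordinates and reduces to the $d=1$ case treated analogously in \cite{zbMATH07216720}.
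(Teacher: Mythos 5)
Your proposal is correct and follows essentially the same route as the paper: isometry via the coordinatewise change of variables $x=\ome(z)$, and involutivity from the two identities $\overline{\ome(\overline{\ome(z)})}=z$ and $\vartheta_{\bfp,U}(z)\overline{\vartheta_{\bfp,U}(\overline{\ome(z)})}=1$; your direct verification of $\calc_{\bfp,\bfq}K_z=\vartheta_{\bfp,U}(z)K_{\overline{\ome(z)}}$, which reduces to the symmetry $(1-\overline{\bfp_j}u_j)(1-\ome_j(u_j)z_j)=(1-\overline{\bfp_j}z_j)(1-u_j\ome_j(z_j))$ for $j\in U_1$, is sound and in fact supplies a step the paper's proof leaves implicit. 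One small correction: $\ome_p\circ\ome_p\ne\mathrm{id}$ in general, since the unimodular prefactor $\overline{p}/p$ in \eqref{eq-omep} destroys the holomorphic involution property of the usual Blaschke factor; the identity you actually need --- and the one your displayed formula for $\calc_{\bfp,\bfq}^2h$ already relies on --- is the antiholomorphic one $\overline{\ome_p(\overline{\ome_p(z)})}=z$, which does hold.
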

\begin{proof}
For every $h\in\spc$, we consider the integral
\begin{gather*}
    I=\int\limits_{\dom}|\vartheta_{\bfp,U}(z)h(\overline{\ome(z)})|^2\prod_{j=1}^d(1+\ell_j)(1-|z_j|^2)^{\ell_j}\,dA(z_j),
\end{gather*}
which implies, after doing the change of variables $x=\ome(z)$, that $I=\|h\|^2$. The equality shows that the operator $\calc_{\bfp,\bfq}$ is isometric. On one hand, we have $\overline{\ome(\overline{\ome(z)})}=z$ and on the other hand, we observe $\vartheta_{\bfp,U}(z)\overline{\vartheta_{\bfp,U}(\overline{\ome(z)})}=1$. Thus, the operator $\calc_{\bfp,\bfq}$ is involutive.
\end{proof}

\section{Real symmetry}\label{sec3}
Recall that a linear operator $Q$ is called \emph{real symmetric} if the equality $Q=Q^*$ holds. In this section, we are concerned with how the function-theoretic properties of the symbols affect the real symmetry of the corresponding weighted composition operator, and vice versa. As a consequence, we show that a real symmetric weighted composition operator must be bounded.

We start the section by a lemma which focuses on symbol computation.
\begin{lem}\label{lem-her-20210505}
Let $d\in\Z_{\geq 1}$ and $\ell\in\Z_{\geq 0}^d$. Let $f:\dom\to\C,g=(g_1,g_2,\cdots,g_d):\dom\to\dom$ be analytic functions. Suppose that
\begin{gather}\label{cond-20210328}
    \overline{f(z)}K_{g(z)}(u)=f(u)K_z(g(u))\quad\forall z,u\in\dom.
\end{gather}
Then the following conclusions hold.
\begin{enumerate}
    \item The functions $f(\cdot),g(\cdot)$ are of forms
    \begin{gather}\label{form-f-her}
    f(z)=\bfc K_{\bfa}(z),\quad 
    g_\kappa(z)=\bfa_\kappa+\dfrac{\bfb_\kappa z_\kappa}{1-\overline{\bfa_\kappa}z_\kappa}\quad
    \forall\kappa\in\zd,
\end{gather}
where coefficients satisfy
\begin{gather}\label{her-cond-1}
    \bfc\in\R,\quad\bfb=(\bfb_1,\bfb_2,\cdots,\bfb_d)\in\R^d,\quad\bfa=(\bfa_1,\bfa_2,\cdots,\bfa_d)\in\dom
\end{gather}
and
\begin{gather}\label{her-cond-2}
    |\bfa_\kappa(\bfb_\kappa-|\bfa_\kappa|^2+1)|+|\bfb_\kappa|\leq 1-|\bfa_\kappa|^2\quad
    \forall\kappa\in\zd.
\end{gather}
\item If the functions are given in item (1), then $W_{f,g,\max}$ is bounded on $\spc$.
\end{enumerate}
\end{lem}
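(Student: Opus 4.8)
The plan is to exploit the hypothesis \eqref{cond-20210328} by evaluating it at well-chosen points, first to pin down the analytic structure of $f$, then to reduce the vector equation to $d$ scalar equations of the type already solved in Lemma \ref{lem-202103262022}. First I would set $u = 0$ in \eqref{cond-20210328}: since $K_{g(z)}(0) = 1$ and $K_z(g(0)) = \prod_j (1 - g_j(0)\overline{z_j})^{-\ell_j-2}$, this immediately gives $\overline{f(z)} = f(0)\prod_j (1-g_j(0)\overline{z_j})^{-\ell_j-2}$, i.e. $f(z) = \overline{f(0)}\prod_j(1-\overline{g_j(0)}z_j)^{-\ell_j-2} = \bfc K_{\bfa}(z)$ with $\bfa_j = \overline{g_j(0)}$ and $\bfc = \overline{f(0)}$; moreover putting $z = u = 0$ shows $f(0) = \overline{f(0)}$, so $\bfc \in \R$. (If $f(0) = 0$ then $f \equiv 0$ and the statement is vacuous, so we may assume $\bfc \ne 0$.) Substituting this explicit form of $f$ back into \eqref{cond-20210328} and cancelling the nonzero constant $\bfc$, the relation becomes a functional equation purely in $g$ and $\bfa$; differentiating appropriately (or comparing the singularity/zero structure in one variable $u_\kappa$ at a time, holding the others fixed) should isolate, for each $\kappa$, exactly the hypothesis \eqref{eq-202103262022} of Lemma \ref{lem-202103262022} for the single-variable map $\psi = g_\kappa$, after identifying the constants by matching $\bfa_\kappa$ with the value forced above. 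Lemma \ref{lem-202103262022} then delivers \eqref{form-f-her}, \eqref{her-cond-1}, and \eqref{her-cond-2} at once.

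The main obstacle I anticipate is the bookkeeping that separates the $d$ variables: \eqref{cond-20210328} is a single scalar identity in $2d$ complex variables, and I must argue that $g_\kappa$ depends only on $z_\kappa$ (so that the substitution into Lemma \ref{lem-202103262022} is legitimate) and that the cross terms factor correctly. Here Lemma \ref{20220224} is the right tool: fixing all variables except $u_\kappa$ and $z_\kappa$ and comparing the two sides of the reduced equation as rational functions of $u_\kappa$, one sees that the products $g_\kappa(u_\kappa)$ times the remaining $u$-dependence must be $u_\kappa$-free in the appropriate sense, and the cumulative statement of Lemma \ref{20220224} then forces the clean product structure $f(u)\prod_{j\le\kappa} g_j$-type factorizations that let the variables decouple entirely. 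Once decoupled, each scalar equation is verbatim \eqref{eq-202103262022}.

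For part (2), assume $f, g$ are as in \eqref{form-f-her}--\eqref{her-cond-2}. The cleanest route is to observe that each $g_\kappa(z_\kappa) = \bfa_\kappa + \bfb_\kappa z_\kappa/(1-\overline{\bfa_\kappa}z_\kappa)$ is a linear fractional self-map of $\D$ — this is exactly what \eqref{her-cond-2} guarantees via Lemma \ref{202202242145} — so $g$ maps $\dom$ into $\dom$ and, being of disk-automorphism--compatible type in each coordinate, the change of variables $x_\kappa = g_\kappa(z_\kappa)$ has bounded Jacobian factors on $\dom$; combined with the explicit kernel-type weight $f = \bfc K_{\bfa}$, whose modulus is comparable to $\prod_\kappa |1-\overline{\bfa_\kappa}z_\kappa|^{-\ell_\kappa-2}$ and hence bounded below on $\dom$ away from the boundary in the relevant way, the integral defining $\|W_{f,g,\max}h\|^2$ is dominated by a constant times $\|h\|^2$ after the substitution, exactly as in the computation in the proof of Proposition \ref{prop-202105032140}. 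Alternatively, and perhaps more in the spirit of the paper, one can factor $W_{f,g,\max}$ through the unitary operators $\cala_{\tha,\max}$ of Proposition \ref{prop-202105032140}: writing each $g_\kappa$ as a composition of a rotation, a dilation-type map, and an involutive automorphism $\varphi_{\tha_\kappa}$, one expresses $W_{f,g,\max}$ as a product of an $\cala_{\tha,\max}$ and an elementary bounded multiplication/substitution operator, whence boundedness and the equality $\text{dom}(W_{f,g,\max}) = \spc$ follow, the latter then upgraded automatically by Corollary \ref{cor-bdd}. I expect part (2) to be essentially a routine estimate once part (1) is in hand; the only mild care needed is uniform control of the weight $|f|$ and the Jacobian near $\partial\dom$, which \eqref{her-cond-2} supplies.
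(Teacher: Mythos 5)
Your derivation of the form of $f$ by setting $u=0$ (and then $z=u=0$ to get $\bfc\in\R$) is correct and in fact shorter than the paper's route, which instead differentiates the kernel identity with respect to $u_\kappa$, divides by the original equation, sets $z=0$, and integrates the resulting first-order equation coordinate by coordinate, using Lemma \ref{20220224} to assemble the product form of $f$. (Minor slip: your computation gives $\bfa_j=g_j(0)$, not $\overline{g_j(0)}$.) The genuine gap in part (1) is the separation of variables for $g$: you correctly flag it as the main obstacle, but the tool you name, Lemma \ref{20220224}, is built for the $f$-factorization and says nothing about why $g_\kappa$ depends only on $u_\kappa$. The paper proves this as a standalone claim: from the quotient of the differentiated identity by the original one it extracts
\begin{gather*}
\pa_{u_\kappa}(g_\kappa(u))\,\bigl(1-u_\kappa\overline{g_\kappa(z)}\bigr)^2=\overline{\pa_{z_\kappa}(g_\kappa(z))}\,\bigl(1-g_\kappa(u)\overline{z_\kappa}\bigr)^2,
\end{gather*}
then differentiates in $u_s$ for $s\ne\kappa$, divides again, and equates coefficients of $\overline{z_\kappa}$ to force $\pa_{u_s}g_\kappa\equiv 0$ (treating separately the case where $\pa_{u_\kappa}g_\kappa$ has a zero). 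Only after that is the substitution $\psi=g_\kappa$ into Lemma \ref{lem-202103262022} legitimate; your ``singularity/zero structure'' alternative is not carried far enough to count as a proof of this step.

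Part (2) is not the routine estimate you expect, and both of your routes have holes. The change-of-variables argument degenerates exactly when some $\bfb_\kappa=0$, i.e.\ $g_\kappa\equiv\bfa_\kappa$ is constant: the Jacobian vanishes identically and no substitution is available. The paper splits the coordinates into the constant block and the non-constant block, treats the non-constant block by citing a boundedness theorem for composition operators with nonvanishing Jacobian, and treats the constant block with the restriction estimate of Lemma \ref{lem-202106091438}; some such case split is unavoidable. The alternative factorization through the unitaries $\cala_{\tha,\max}$ also fails in general, because a $g_\kappa$ satisfying \eqref{her-cond-2} need not be an automorphism of $\D$ (take $\bfb_\kappa$ small), so $W_{f,g,\max}$ is not a unitary composed with an ``elementary'' operator. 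Finally, what you need about the weight is that $|f|=|\bfc|\,|K_{\bfa}|$ is bounded \emph{above} on $\dom$ (true since $\bfa\in\dom$), not ``bounded below''.
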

\begin{proof}
(1) Taking into account the form of kernel functions, \eqref{cond-20210328} can be expressed in the following
\begin{gather}\label{eq-202103252108}
    \overline{f(z)}\prod_{j=1}^d\left(1-g_j(u)\overline{z_j}\right)^{\ell_j+2}
    =f(u)\prod_{j=1}^d\left(1-u_j\overline{g_j(z)}\right)^{\ell_j+2}.
\end{gather}
Take arbitrarily $\kappa\in\zd$. Differentiating \eqref{eq-202103252108} with respect to the variable $u_\kappa$ (i.e. taking derivative $\pa_{u_\kappa}$), we obtain
\begin{gather}
    \nonumber-\overline{f(z)}\sum_{t=1}^d
    (\ell_t+2)\left(1-g_t(u)\overline{z_t}\right)^{\ell_t+1}\overline{z_t}\pa_{u_\kappa}(g_t(u))
    \prod_{j\ne t}\left(1-g_j(u)\overline{z_j}\right)^{\ell_j+2}\\
    \nonumber=\pa_{u_\kappa}(f(u))\prod_{j=1}^d\left(1-u_j\overline{g_j(z)}\right)^{\ell_j+2}\\
    \label{eq-202103280828}-f(u)(\ell_\kappa+2)\left(1-u_\kappa\overline{g_\kappa(z)}\right)^{\ell_\kappa+1}\overline{g_\kappa(z)}
    \prod_{j\ne\kappa}\left(1-u_j\overline{g_j(z)}\right)^{\ell_j+2}.
\end{gather}
Equation \eqref{eq-202103280828} divided by \eqref{eq-202103252108} is equal to
\begin{gather}\label{eq-202103271045}
    -\sum_{t=1}^d(\ell_t+2)\dfrac{\overline{z_t}\pa_{u_\kappa}(g_t(u))}{1-g_t(u)\overline{z_t}}
    =\dfrac{\pa_{u_\kappa}(f(u))}{f(u)}-(\ell_\kappa+2)\dfrac{\overline{g_\kappa(z)}}{1-u_\kappa\overline{g_\kappa(z)}}.
\end{gather}
In particular with $z=0$, we get
\begin{gather}\label{eq-202103281033}
    \dfrac{\pa_{u_\kappa}(f(u))}{f(u)}=(\ell_\kappa+2)\dfrac{\overline{g_\kappa(0)}}{1-u_\kappa\overline{g_\kappa(0)}}.
\end{gather}
Setting
\begin{gather*}
    H(u)=(u_\kappa\overline{g_\kappa(0)}-1)^{\ell_\kappa+2}f(u),
\end{gather*}
then
\begin{gather*}
    \pa_{u_\kappa}(f(u))=\dfrac{\pa_{u_\kappa}(H(u))}{(u_\kappa\overline{g_\kappa(0)}-1)^{\ell_\kappa+2}}
    -\dfrac{(\ell_\kappa+2)\overline{g_\kappa(0)}H(u)}{(u_\kappa\overline{g_\kappa(0)}-1)^{\ell_\kappa+3}}
\end{gather*}
and so \eqref{eq-202103281033} becomes $\pa_{u_\kappa}(H(u))=0$. For that reason, $(u_\kappa\overline{g_\kappa(0)}-1)^{\ell_\kappa+2}f(u)$ is a function of variables $u_1,u_2,\cdots,u_{\kappa-1},u_{\kappa+1},\cdots,u_d$. Since $\kappa$ is arbitrary, by Lemma \ref{20220224} the function $f(\cdot)$ must be of form in \eqref{form-f-her}. Next, we find the function $g(\cdot)$ as follows.

{\bf Claim:} $g_\kappa(\cdot)$ is a function of one variable $u_\kappa$.

Indeed, after taking derivative $\pa_{\overline{z_\kappa}}$ in \eqref{eq-202103271045}, the following is obtained
\begin{gather*}
    \dfrac{\pa_{u_\kappa}(g_\kappa(u))}{(1-g_\kappa(u)\overline{z_\kappa})^2}
    =\dfrac{\overline{\pa_{z_\kappa}(g_\kappa(z))}}{(1-u_\kappa\overline{g_\kappa(z)})^2}
\end{gather*}
or equivalently to saying
\begin{gather}\label{eq-202103260816}
    \pa_{u_\kappa}(g_\kappa(u))(1-u_\kappa\overline{g_\kappa(z)})^2
    =\overline{\pa_{z_\kappa}(g_\kappa(z))}(1-g_\kappa(u)\overline{z_\kappa})^2.
\end{gather}
For every $s\in\zd\setminus\{\kappa\}$, we differentiate \eqref{eq-202103260816} with respect to the variable $u_s$
\begin{gather}\label{eq-202103280839}
    \pa_{u_s}\circ\pa_{u_\kappa}(g_\kappa(u))(1-u_\kappa\overline{g_\kappa(z)})^2
    =-2\overline{\pa_{z_\kappa}(g_\kappa(z))}(1-g_\kappa(u)\overline{z_\kappa})
    \overline{z_\kappa}\pa_{u_s}(g_\kappa(u)).
\end{gather}
If there is $u_\star\in\dom$ with $\pa_{u_\kappa}(g_\kappa(u_\star))=0$, then \eqref{eq-202103260816} gives
\begin{gather*}
    \pa_{z_\kappa}(g_\kappa(z))=0\quad\forall z\in\dom,
\end{gather*}
and hence by \eqref{eq-202103280839} we have
\begin{gather*}
    \pa_{u_s}\circ\pa_{u_\kappa}(g_\kappa(u))=0\quad\forall u\in\dom;
\end{gather*}
meaning that $g_\kappa(\cdot)$ is a function of one variable $u_\kappa$. Now consider the case when $\pa_{u_\kappa}g_\kappa(\cdot)\not\equiv 0$. Equation \eqref{eq-202103280839} divided by \eqref{eq-202103260816} is equal to
\begin{gather*}
    \dfrac{\pa_{u_s}\circ\pa_{u_\kappa}(g_\kappa(u))}{\pa_{u_\kappa}(g_\kappa(u))}
    =-\dfrac{2\overline{z_\kappa}\pa_{u_s}(g_\kappa(u))}{1-g_\kappa(u)\overline{z_\kappa}},
\end{gather*}
which implies, by way of equating coefficients of $\overline{z_\kappa}$, that
\begin{gather*}
    \pa_{u_s}\circ\pa_{u_\kappa}(g_\kappa(u))=0=\pa_{u_s}(g_\kappa(u)).
\end{gather*}
The claim is proven. 

Suppose that $g_\kappa(u)=\psi(u_\kappa)$ for some function $\psi:\D\to\C$. Thus, \eqref{eq-202103260816} is simplified to \eqref{eq-202103262022}. By Lemma \ref{lem-202103262022}, the function $g_\kappa(\cdot)$ is of the desired form.

(2) Since the function $f$ is bounded, it is enough to show that the composition operator $C_{g,\max}$ is bounded. Before proving this, we denote
\begin{gather*}
    \Omega=\{j\in\zd:b_j=0\},\quad \Delta=\{j\in\zd:b_j\ne 0\}.
\end{gather*}
Suppose that $n_1,n_2,\cdots,n_t$ and $m_1,m_2,\cdots,m_s$ are elements of $\Omega,\Delta$, respectively; meaning
\begin{gather*}
    \Omega=\{n_1,n_2,\cdots,n_t\},\quad\Delta=\{m_1,m_2,\cdots,m_s\}.
\end{gather*}
For each $z=(z_1,z_2,\cdots,z_d)\in\C^d$, we express $z=(z_\Omega,z_\Delta)$, where $z_\Omega=(z_{n_1},z_{n_2},\cdots,z_{n_t})$ and $z_\Delta=(z_{m_1},z_{m_2},\cdots,z_{m_s})$. Then $g(z)=(\bfa_\Omega,g_\Delta(z_\Delta))$ and 
\begin{gather*}
    C_{g,\max}h(z)=h(\bfa_\Omega,g_\Delta(z_\Delta))=h_{\bfa_\Omega}(g_\Delta(z_\Delta))=C_{g_\Delta,\max}h_{\bfa_\Omega}(z_\Delta).
\end{gather*}
Note that the Jacobian determinant of $g_\Delta$ is
\begin{gather*}
    J=\prod_{j\in\Delta}\dfrac{|b_j|^2}{|1-\overline{a_j}z_j|^4}\ne 0,
\end{gather*}
so by \cite[Theorem 10]{zbMATH05285682}, the operator $C_{g_\Delta,\max}$ is bounded and together with Lemma \ref{lem-202106091438}, we get the boundedness of the operator $C_{g,\max}$.
\end{proof}

With all preparation in place we give one of the main results of this section. The following theorem provides a useful criteria to determine whether a maximal weighted composition operator is real symmetric or not.

\begin{thm}\label{thm-her-1}
Let $d\in\Z_{\geq 1}$ and $\ell\in\Z_{\geq 0}^d$. Let $f:\dom\to\C,g:\dom\to\dom$ be analytic functions. Then the operator $W_{f,g,\max}$ is real symmetric on $\spc$ if and only if the functions $f(\cdot),g(\cdot)$ are of forms in \eqref{form-f-her}, where coefficients verify \eqref{her-cond-1}-\eqref{her-cond-2}. In this case, the operator $W_{f,g,\max}$ is bounded.
\end{thm}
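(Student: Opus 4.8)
The plan is to reduce Theorem \ref{thm-her-1} to Lemma \ref{lem-her-20210505} by passing back and forth between the operator identity $W_{f,g,\max}=W_{f,g,\max}^{*}$ and the pointwise kernel identity \eqref{cond-20210328}.

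For the \emph{necessity}, I would start from the observation that a real symmetric operator is in particular self-adjoint, hence densely defined, so Lemma \ref{lem-W*Kz} applies and gives $W_{f,g,\max}^{*}K_{z}=\overline{f(z)}\,K_{g(z)}$ for every $z\in\dom$; in particular $K_{z}\in\text{dom}(W_{f,g,\max}^{*})=\text{dom}(W_{f,g,\max})$. Using $W_{f,g,\max}=W_{f,g,\max}^{*}$ together with the defining formula $(W_{f,g,\max}K_{z})(u)=f(u)K_{z}(g(u))$, I would evaluate at an arbitrary $u\in\dom$ to obtain exactly \eqref{cond-20210328}. At this point Lemma \ref{lem-her-20210505}(1) forces $f$ and $g$ into the forms \eqref{form-f-her} with coefficients obeying \eqref{her-cond-1}--\eqref{her-cond-2}, and Lemma \ref{lem-her-20210505}(2) delivers the boundedness asserted at the end of the theorem.

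For the \emph{sufficiency}, I would assume $f,g$ are as in \eqref{form-f-her} with \eqref{her-cond-1}--\eqref{her-cond-2}. First, Lemma \ref{lem-her-20210505}(2) makes $W_{f,g,\max}$ bounded, hence everywhere defined, so every $K_{z}$ lies in its domain (and so does every kernel in the domain of its adjoint). Next I would check \eqref{cond-20210328} by direct substitution of the explicit symbols: after cancelling the common exponents $\ell_{j}+2$ it reduces, coordinate by coordinate, to the linear-fractional identity $(1-u_{j}\overline{\bfa_{j}})(1-g_{j}(u)\overline{z_{j}})=(1-\bfa_{j}\overline{z_{j}})(1-u_{j}\overline{g_{j}(z)})$, which follows upon clearing denominators and using $\bfb_{j}\in\R$. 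Combining this with the defining formula for $W_{f,g,\max}$ on kernels and with Lemma \ref{lem-W*Kz} yields $W_{f,g,\max}K_{z}=\overline{f(z)}K_{g(z)}=W_{f,g,\max}^{*}K_{z}$ for all $z$; since $\text{span}\{K_{z}:z\in\dom\}$ is dense in $\spc$ and both $W_{f,g,\max}$ and $W_{f,g,\max}^{*}$ are bounded, they agree everywhere, i.e.\ $W_{f,g,\max}=W_{f,g,\max}^{*}$.

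The substantive content is already packaged inside Lemma \ref{lem-her-20210505}: the classification of symbols satisfying \eqref{cond-20210328} via the functional equation \eqref{eq-202103262022} and Lemma \ref{lem-202103262022}, together with the boundedness argument leaning on \cite{zbMATH05285682} and Lemma \ref{lem-202106091438}. Granting that lemma, the only real work left is the sufficiency-direction verification of \eqref{cond-20210328} --- a routine but slightly fiddly linear-fractional computation --- followed by the standard reproducing-kernel density argument upgrading equality on kernels to operator equality; I expect no genuine obstacle beyond this bookkeeping.
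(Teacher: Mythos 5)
Your proposal is correct and follows essentially the same route as the paper: both directions reduce to Lemma \ref{lem-her-20210505} by applying the operator identity to the kernel functions via Lemma \ref{lem-W*Kz}, with boundedness coming from Lemma \ref{lem-her-20210505}(2) and the sufficiency upgraded from kernels to all of $\spc$ by density. Your write-up merely makes explicit two steps the paper leaves implicit (the coordinatewise linear-fractional verification of \eqref{cond-20210328}, which also uses $\bfc\in\R$ and not only $\bfb_j\in\R$, and the density argument), so there is nothing to correct.
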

\begin{proof}
Suppose that the operator $W_{f,g,\max}$ is real symmetric on $\spc$, which gives $W_{f,g,\max}^*=W_{f,g,\max}$. In particular, the following is obtained
\begin{gather}\label{eq-202105041444}
    W_{f,g,\max}^*K_z=W_{f,g,\max}K_z\quad\forall z\in\dom.
\end{gather}
By Lemma \ref{lem-W*Kz}, \eqref{eq-202105041444} becomes \eqref{cond-20210328} and Lemma \ref{lem-her-20210505} gives the necessary condition.

For the sufficient condition, take $f(\cdot),g(\cdot)$ as in the statement of the theorem. Lemma \ref{lem-her-20210505} shows that the operator $W_{f,g,\max}$ is bounded. By \eqref{form-f-her}-\eqref{her-cond-2} and Lemma \ref{lem-W*Kz}, the operator verifies \eqref{eq-202105041444} and so it must be real symmetric.
\end{proof}

The following result shows that the real symmetry cannot be separated from the maximal domain; in other words, a real symmetric weighted composition operator must be maximal.
\begin{thm}\label{thm-her-2}
Let $d\in\Z_{\geq 1}$ and $\ell\in\Z_{\geq 0}^d$. Let $f:\dom\to\C,g:\dom\to\dom$ be analytic functions. Then the operator $W_{f,g}$ is real symmetric on $\spc$ if and only if it verifies two assertions.
\begin{enumerate}
\item \eqref{form-f-her}-\eqref{her-cond-2} hold. 
    \item The operator $W_{f,g}$ is maximal; that is $W_{f,g}=W_{f,g,\max}$.
\end{enumerate}
In this case, the operator $W_{f,g}$ is bounded.
\end{thm}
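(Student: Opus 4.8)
The plan is to reduce everything to Theorem \ref{thm-her-1}, which already disposes of the maximal case. The sufficiency direction is then immediate: if (1) and (2) hold, then $W_{f,g}=W_{f,g,\max}$ and the symbols have the forms \eqref{form-f-her}--\eqref{her-cond-2}, so Theorem \ref{thm-her-1} tells us $W_{f,g,\max}$, hence $W_{f,g}$, is real symmetric and bounded. All the content is in the converse.

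For the converse, I would start from $W_{f,g}=W_{f,g}^*$. Being self-adjoint, $W_{f,g}$ is in particular densely defined, so Lemma \ref{lem-W*Kz} applies and gives $K_z\in\text{dom}(W_{f,g}^*)$ with $W_{f,g}^*K_z=\overline{f(z)}K_{g(z)}$ for every $z\in\dom$. Since $W_{f,g}=W_{f,g}^*$, the same $K_z$ lies in $\text{dom}(W_{f,g})$, and as $W_{f,g}\preceq W_{f,g,\max}$ we have $W_{f,g}K_z=\widetilde{[f,g]}K_z$; equating the two computations of $W_{f,g}K_z=W_{f,g}^*K_z$ and evaluating at an arbitrary $u\in\dom$ yields precisely the identity \eqref{cond-20210328}. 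Lemma \ref{lem-her-20210505} then forces $f$ and $g$ into the forms \eqref{form-f-her} with coefficients satisfying \eqref{her-cond-1}--\eqref{her-cond-2}, which is assertion (1), and it further tells us that $W_{f,g,\max}$ is bounded on $\spc$; combined with Theorem \ref{thm-her-1} this makes $W_{f,g,\max}$ a bounded self-adjoint operator with domain all of $\spc$.

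It then remains to promote $W_{f,g}\preceq W_{f,g,\max}$ to an equality, which is assertion (2); this is the one genuinely operator-theoretic step, namely that a self-adjoint operator has no proper symmetric extension. Concretely, taking adjoints reverses the inclusion, so $W_{f,g,\max}^*\preceq W_{f,g}^*$, and since $W_{f,g,\max}=W_{f,g,\max}^*$ and $W_{f,g}=W_{f,g}^*$ this reads $W_{f,g,\max}\preceq W_{f,g}$; together with the starting inclusion we conclude $W_{f,g}=W_{f,g,\max}$. The boundedness claim in this case is then inherited from $W_{f,g,\max}$ via Lemma \ref{lem-her-20210505}(2). The step I expect to require the most care is this last chain of adjoint inclusions: one must be sure $W_{f,g,\max}$ is actually self-adjoint, not merely symmetric, before reversing the inclusion — and that is exactly what Theorem \ref{thm-her-1} provides.
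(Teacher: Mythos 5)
Your argument is correct and follows essentially the same route as the paper: derive the kernel identity \eqref{cond-20210328} from $W_{f,g}=W_{f,g}^*$ via Lemma \ref{lem-W*Kz}, feed it to Lemma \ref{lem-her-20210505} to get assertion (1) and boundedness, invoke Theorem \ref{thm-her-1} to conclude $W_{f,g,\max}=W_{f,g,\max}^*$, and close the chain $W_{f,g}\preceq W_{f,g,\max}=W_{f,g,\max}^*\preceq W_{f,g}^*=W_{f,g}$ to obtain maximality. The only cosmetic difference is that you apply Lemma \ref{lem-W*Kz} directly to the self-adjoint $W_{f,g}$, whereas the paper first passes through $W_{f,g,\max}^*\preceq W_{f,g}\preceq W_{f,g,\max}$ and evaluates on kernel functions there; the two are interchangeable.
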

\begin{proof}
The implication $``\Longleftarrow"$ is proven in Theorem \ref{thm-her-1} and the remaing task is to prove the implication $``\Longrightarrow"$. Indeed, since $W_{f,g}\preceq W_{f,g,\max}$, by \cite[Proposition 1.6]{KS}, we have
$$
W_{f,g,\max}^*\preceq W_{f,g}^*=W_{f,g}\preceq W_{f,g,\max}.
$$
Lemma \ref{lem-W*Kz} shows that $K_z\in\text{dom}(W_{f,g,\max}^*)$, and so,
$$
W_{f,g,\max}^*K_z(u)=W_{f,g,\max}K_z(u)\quad\forall z,u\in\dom.
$$
By Lemmas \ref{lem-W*Kz} and \ref{lem-her-20210505}, conditions \eqref{form-f-her}-\eqref{her-cond-2} hold, and hence, by Theorem \ref{thm-her-1}, the operator $W_{f,g,\max}=W_{f,g,\max}^*$. Using this equality, item (2) is proven as follows
$$
W_{f,g}\preceq W_{f,g,\max}=W_{f,g,\max}^*\preceq W_{f,g}^*=W_{f,g}.
$$
\end{proof}

\section{Unitary property}\label{sec4}
Recall that a bounded linear operator $Q$ is called \emph{unitary} if the equality $QQ^*=Q^*Q=I$ holds. In this section, we describe all weighted composition
operators that are unitary. The following lemma gives a partial characterization of the operator $W_{f,g,\max}$ under the assumption that the symbol $g(\cdot)$ fixes $0$.

\begin{lem}\label{lem-univ}
Let $d\in\Z_{\geq 1}$ and $\ell\in\Z_{\geq 0}^d$. Let $f:\dom\to\C,g=(g_1,g_2,\cdots,g_d):\dom\to\dom$ be analytic functions. Suppose that
\begin{gather}\label{cond-202105021108}
    \overline{f(z)}f(u)K_{g(z)}(g(u))=K_z(u)\quad\forall z,u\in\dom.
\end{gather}
If $g(0)=0$, then
\begin{gather}
    f(\cdot)\equiv\bfc,\quad 
    g_\kappa(z)=\bfa_{\kappa}z_{\phi(\kappa)},
\end{gather}
where
\begin{gather}\label{cond-202105041603}
    \text{$\phi:\zd\to\zd$ is bijective and}\quad\bfc,\bfa_\kappa\in\T.
\end{gather}
\end{lem}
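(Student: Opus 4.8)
The plan is to extract pointwise functional equations from the kernel identity \eqref{cond-202105021108} exactly as in the proof of Lemma \ref{lem-her-20210505}, then exploit the extra hypothesis $g(0)=0$ to pin down the symbols. Writing out \eqref{cond-202105021108} with the explicit form of the reproducing kernel gives
\begin{gather*}
    \overline{f(z)}f(u)\prod_{j=1}^d\bigl(1-g_j(z)\overline{g_j(u)}\bigr)^{-\ell_j-2}
    =\prod_{j=1}^d\bigl(1-z_j\overline{u_j}\bigr)^{-\ell_j-2}.
\end{gather*}
Setting $u=0$ and using $g(0)=0$ yields $\overline{f(z)}f(0)=1$, so $f(\cdot)\equiv\bfc$ is a nonzero constant with $|\bfc|=1$, i.e.\ $\bfc\in\T$ and the first claim is done; the identity then reduces to the purely geometric statement
\begin{gather}\label{eq-plan-geom}
    \prod_{j=1}^d\bigl(1-g_j(z)\overline{g_j(u)}\bigr)^{\ell_j+2}
    =\prod_{j=1}^d\bigl(1-z_j\overline{u_j}\bigr)^{\ell_j+2}\quad\forall z,u\in\dom.
\end{gather}

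Next I would differentiate \eqref{eq-plan-geom}, following the template already used for \eqref{eq-202103262022}: apply $\pa_{u_\kappa}$, divide by \eqref{eq-plan-geom}, and then apply $\pa_{\overline{z_\kappa}}$ (or rather $\pa_{z_\kappa}$, passing conjugates through) to separate the variable $\kappa$. The same "claim" argument as in Lemma \ref{lem-her-20210505} — taking an extra derivative $\pa_{u_s}$ for $s\ne\kappa$ and equating coefficients — shows that each coordinate $g_\kappa$ depends on a single variable; call it $g_\kappa(z)=\psi_\kappa(z_{\sigma(\kappa)})$ for some index $\sigma(\kappa)$ and some one-variable analytic self-map $\psi_\kappa:\D\to\D$. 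Because $g(0)=0$ we get $\psi_\kappa(0)=0$, and the resulting one-variable specialization of \eqref{eq-plan-geom} (after fixing all but one pair of variables) forces $(1-\psi_\kappa(s)\overline{\psi_\kappa(t)})^{\ell_\kappa+2}$ to agree with $(1-s\overline{t})^{\ell_{\sigma(\kappa)}+2}$; expanding both sides in power series and comparing the coefficient of $s\overline{t}$ gives $(\ell_\kappa+2)|\psi_\kappa'(0)|^2=\ell_{\sigma(\kappa)}+2$ after matching degrees one sees $\ell_\kappa=\ell_{\sigma(\kappa)}$ and $|\psi_\kappa'(0)|=1$, whence by the Schwarz lemma $\psi_\kappa(z)=\bfa_\kappa z$ with $\bfa_\kappa\in\T$. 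This establishes $g_\kappa(z)=\bfa_\kappa z_{\phi(\kappa)}$ with $\bfa_\kappa\in\T$, where $\phi=\sigma$.

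It remains to see that $\phi$ is a bijection of $\zd$. Here I would plug the rotation-diagonal form back into \eqref{eq-plan-geom}: it becomes $\prod_{j=1}^d(1-z_{\phi(j)}\overline{u_{\phi(j)}})^{\ell_j+2}=\prod_{j=1}^d(1-z_j\overline{u_j})^{\ell_j+2}$, and since the $\bfa_\kappa\in\T$ factors cancelled, comparing the two sides as functions of the independent variables $z_1,\dots,z_d$ forces $\phi$ to hit every index — if some index $m$ were missed, the right side would contain the factor $(1-z_m\overline{u_m})^{\ell_m+2}$ while the left side would be independent of $z_m$, a contradiction. Hence $\phi$ is surjective, thus bijective, and \eqref{cond-202105041603} holds.

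The main obstacle I anticipate is the bookkeeping in the coefficient-comparison step that upgrades "$g_\kappa$ depends on one variable" to "$g_\kappa$ is a unimodular rotation with matching weight index": one must be careful that the exponents $\ell_\kappa+2$ and $\ell_{\sigma(\kappa)}+2$ need not a priori be equal, so the power-series matching has to be done simultaneously for the leading terms, and one has to rule out $\psi_\kappa$ being a Blaschke factor of higher degree — which is where $\psi_\kappa(0)=0$ together with $|\psi_\kappa'(0)|=1$ and the Schwarz lemma becomes essential. Everything else is a direct adaptation of Lemmas \ref{lem-her-20210505} and \ref{lem-202103262022}.
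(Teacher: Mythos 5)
Your determination of $f$ and your final surjectivity argument for $\phi$ are fine, but the middle of your proof has a genuine gap: the ``claim'' argument from Lemma \ref{lem-her-20210505} does not transfer to the identity \eqref{cond-202105021108}. In the real-symmetric case the logarithmic-derivative identity \eqref{eq-202103271045} has the form $-\sum_t(\ell_t+2)\overline{z_t}\,\pa_{u_\kappa}(g_t(u))/(1-g_t(u)\overline{z_t})=\cdots$, where the $t$-th summand depends on $z$ only through $\overline{z_t}$; applying $\pa_{\overline{z_\kappa}}$ therefore kills every term except $t=\kappa$ and produces the separated equation \eqref{eq-202103260816}, which is what drives the one-variable claim. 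In the unitary case the analogous identity reads
\[
\sum_{t=1}^d(\ell_t+2)\frac{\pa_{u_\kappa}(g_t(u))\,\overline{g_t(z)}}{1-g_t(u)\overline{g_t(z)}}=(\ell_\kappa+2)\frac{\overline{z_\kappa}}{1-u_\kappa\overline{z_\kappa}},
\]
and now every summand depends on $z$ through $\overline{g_t(z)}$, hence a priori on all of $z_1,\dots,z_d$; differentiating in $\overline{z_\kappa}$ isolates nothing, so at this stage you cannot conclude that $g_\kappa$ depends on a single coordinate. The paper's proof sidesteps this by setting $u=0$ in the displayed identity and invoking $g(0)=0$: this yields the linear system $\sum_t(\ell_t+2)\pa_{u_\kappa}(g_t(0))\overline{g_t(z)}=(\ell_\kappa+2)\overline{z_\kappa}$, i.e.\ $\mba g(z)=\mbb z$ with $\mba$ invertible, so $g$ is a \emph{linear} map $g(z)=\mbh z$; the single-variable structure, the unimodularity of the entries, and the permutation all come out of an analysis of the matrix $\mbh$. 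That substitution is the key idea your plan is missing.

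A secondary issue: even granting $g_\kappa(z)=\psi_\kappa(z_{\sigma(\kappa)})$, your coefficient comparison ``$(\ell_\kappa+2)|\psi_\kappa'(0)|^2=\ell_{\sigma(\kappa)}+2$'' silently assumes $\sigma$ is injective over $\sigma(\kappa)$. If several indices $j$ satisfy $\sigma(j)=m$, the specialization to the pair $(z_m,u_m)$ gives $\prod_{j:\sigma(j)=m}(1-\psi_j(z_m)\overline{\psi_j(u_m)})^{\ell_j+2}=(1-z_m\overline{u_m})^{\ell_m+2}$, and the coefficient of $z_m\overline{u_m}$ is a sum over all such $j$, from which $|\psi_\kappa'(0)|=1$ does not follow. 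This part is repairable: your ``missing index'' argument uses only the single-variable form and $g(0)=0$, so it already shows $\sigma$ is surjective, hence bijective, and should be run \emph{before} the Schwarz-lemma step; then $\ell_{\sigma(j)}+2=(\ell_j+2)|\psi_j'(0)|^2\le\ell_j+2$ for every $j$, and summing over $j$ with $\sigma$ bijective forces $\ell_{\sigma(j)}=\ell_j$ and $|\psi_j'(0)|=1$, whence each $\psi_j$ is a rotation. But the first gap is the substantive one, and the paper's matrix argument (which also handles the possibility that several entries of a column of $\mbh$ are nonzero, via $\prod_{j\in U_p}|h_{j,p}|^{\ell_j+2}=1$ and $|h_{j,p}|\le 1$) is what fills it.
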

\begin{proof}
Setting $z=0$ in \eqref{cond-202105021108}, we find $\overline{f(0)}f(u)=1$, which means $f(\cdot)\equiv\bfc$, where $\bfc\in\T$. Consequently, taking into account the explicit form of kernel functions, \eqref{cond-202105021108} is reduced to the following
\begin{gather}\label{eq-202105021643}
    \prod_{j=1}^d(1-g_j(u)\overline{g_j(z)})^{\ell_j+2}
    =\prod_{j=1}^d(1-u_j\overline{z_j})^{\ell_j+2}.
\end{gather}
Let $\kappa\in\zd$. Taking derivative $\pa_{u_\kappa}$, \eqref{eq-202105021643} becomes
\begin{gather}
    \nonumber-\sum_{t=1}^d(\ell_t+2)(1-g_t(u)\overline{g_t(z)})^{\ell_t+1}\pa_{u_\kappa}(g_t(u))\overline{g_t(z)}
    \prod_{j\ne t}(1-g_j(u)\overline{g_j(z)})^{\ell_j+2}\\
    \label{eq-202105021650}=-(\ell_\kappa+2)\overline{z_\kappa}(1-u_\kappa\overline{z_\kappa})^{\ell_\kappa+1}
    \prod_{j\ne\kappa}(1-u_j\overline{z_j})^{\ell_j+2}.
\end{gather}
Equation \eqref{eq-202105021650} divided by \eqref{eq-202105021643} is equal to
\begin{gather*}
    \sum_{t=1}^d(\ell_t+2)\dfrac{\pa_{u_\kappa}(g_t(u))\overline{g_t(z)}}{1-g_t(u)\overline{g_t(z)}}
    =(\ell_\kappa+2)\dfrac{\overline{z_\kappa}}{1-u_\kappa\overline{z_\kappa}}.
\end{gather*}
In particular with $u=0$, we get
\begin{gather}\label{eq-202105022112}
    \sum_{t=1}^d(\ell_t+2)\pa_{u_\kappa}(g_t(0))\overline{g_t(z)}
    =(\ell_\kappa+2)\overline{z_\kappa}\quad\forall\kappa\in\zd.
\end{gather}
Let
\begin{gather*}
\mba=
\begin{pmatrix}
(\ell_1+2)\overline{\pa_{u_1}(g_1(0))} & (\ell_2+2)\overline{\pa_{u_1}(g_2(0))} &\cdots &(\ell_d+2)\overline{\pa_{u_1}(g_d(0))}\\
& & &\\
(\ell_1+2)\overline{\pa_{u_2}(g_1(0))} & (\ell_2+2)\overline{\pa_{u_2}(g_2(0))} &\cdots &(\ell_d+2)\overline{\pa_{u_2}(g_d(0))}\\
\vdots &\vdots & \ddots & \vdots\\
(\ell_1+2)\overline{\pa_{u_d}(g_1(0))} & (\ell_2+2)\overline{\pa_{u_d}(g_2(0))} &\cdots &(\ell_d+2)\overline{\pa_{u_d}(g_d(0))}
\end{pmatrix}
\end{gather*}
and $\mbb$ be the diagonal matrix given by
\begin{gather*}
    \mbb=\text{diag}\left(\ell_1+2,\ell_2+2,\cdots,\ell_d+2\right).
\end{gather*}
Now equation \eqref{eq-202105022112} is rewritten as $\mba g(z)=\mbb z$, which gives $g(z)=\mbh z$, where $\mbh=(h_{i,j})_{d\times d}=\mba^{-1}\mbb$.

Fix $p\in\zd$. Setting $z=(0,\ldots,0,z_p,0,\ldots,0)$ in \eqref{eq-202105021643}, where $z_p$ is the $p$-th coordinate, we obtain
\begin{gather}\label{eq-202105030934}
    \prod_{j=1}^d\left(1-g_j(u)\overline{h_{j,p}z_p}\right)^{\ell_j+2}
    =(1-u_p\overline{z_p})^{\ell_p+2}.
\end{gather}
We continue with choosing $u=(0,\ldots,0,u_p,0,\ldots,0)$, where $u_p$ is the $p$-th coordinate, and then \eqref{eq-202105030934} is reduced to the following
\begin{gather*}
    \prod_{j=1}^d(1-|h_{j,p}|^2u_p\overline{z_p})^{\ell_j+2}=(1-u_p\overline{z_p})^{\ell_p+2}.
\end{gather*}
Hence, since $u_p,z_p\in\D$ are arbitrary, coefficients $h_{j,p}$ verify
\begin{gather}\label{eq-202105030930}
    \prod_{j=1}^d(1-|h_{j,p}|^2x)^{\ell_j+2}=(1-x)^{\ell_p+2}\quad\forall x\in\D.
\end{gather}
The highest power of the left-hand side is $\sum\limits_{j=1}^d(\ell_j+2)$; meanwhile of the right-hand side is $\ell_p+2$. Thus, the coefficient of $x^{\sum\limits_{j=1}^d(\ell_j+2)}$ in \eqref{eq-202105030930} must be $0$; meaning that
\begin{gather*}
    \prod_{j=1}^d|h_{j,p}|^2=0.
\end{gather*}
Denote
\begin{gather*}
    V_p=\{j\in\zd:h_{j,p}=0\},\quad U_p=\{j\in\zd:h_{j,p}\ne 0\}.
\end{gather*}
If $U_p=\emptyset$, then $h_{j,p}=0$ for every $j\in\zd$; but this is impossible as the matrix $\mbh$ is invertible. Now consider the situation when $U_p\ne\emptyset$. Setting $u=w_{-p}:=(w_1,\ldots,w_{p-1},0,w_{p+1},\ldots,w_d)$ in \eqref{eq-202105030934}, where $0$ is the $p$-th coordinate, we get
\begin{gather*}
    \prod_{j=1}^d\left(1-g_j(w_{-p})\overline{h_{j,p}z_p}\right)^{\ell_j+2}=1.
\end{gather*}
Through equating coefficients of $\overline{z_p}$ and then using the fact that $h_{j,p}\ne 0$ for $j\in U_p$, we have
\begin{gather}
    \nonumber g_j(w_{-p})=0\Longrightarrow h_{j,t}=0\quad\forall j\in U_p,t\in\zd\setminus\{p\},
\end{gather}
and so
\begin{gather}
    \label{eq-202105031340}g_j(u_1,u_2,\cdots,u_d)=h_{j,p}u_p\quad\forall j\in U_p.
\end{gather}
Note that equation \eqref{eq-202105030930} can be rewritten in the following form
\begin{gather*}
    \prod_{j\in U_p}(1-|h_{j,p}|^2x)^{\ell_j+2}=(1-x)^{\ell_p+2},
\end{gather*}
which implies, after equating coefficients of $x^{\ell_p+2}$, that
\begin{gather*}
    \prod_{j\in U_p}|h_{j,p}|^{\ell_j+2}=1.
\end{gather*}
Since $g(z)=\mbh z$ is a self-mapping of $\dom$, we have
\begin{gather*}
    |h_{j,p}|\leq 1\quad\forall j\in U_p,p\in\zd\quad
    \Longrightarrow\quad\prod_{j\in U_p}|h_{j,p}|^{\ell_j+2}\leq 1\quad\forall p\in\zd.
\end{gather*}
Thus,
\begin{gather*}
    |h_{j,p}|= 1\quad\forall j\in U_p,p\in\zd.
\end{gather*}

{\bf Claim:} For every $p\in\zd$, $U_p$ is a singleton set.

Since $U_p,p\in\zd$ are subsets of $\zd$, it is enough to show that the family $\{U_p:p\in\zd\}$ consists of disjoint sets. Indeed, assume in contrary that there exist $p,q\in\zd$ with $p\ne q$ such that $U_p\cap U_q\ne\emptyset$. Let $t\in U_p\cap U_q$. It follows from \eqref{eq-202105031340} that
\begin{gather*}
    g_t(u)=h_{t,p}u_p=h_{t,q}u_q;
\end{gather*}
but this is impossible as $|h_{t,p}|= 1=|h_{t,q}|$ and $p\ne q$.

Let us define the map $\eta:\zd\to\zd$ by setting $\eta(p)=j$, where $j,p\in\zd$ satisfy \eqref{eq-202105031340}. Then $\eta$ is bijective and \eqref{cond-202105041603} holds, where $\phi=\eta^{-1}$.
\end{proof}

With the help of Lemma \ref{lem-univ}, we give a complete characterization of unitary weighted composition operators. 
\begin{thm}
Let $d\in\Z_{\geq 1}$ and $\ell\in\Z_{\geq 0}^d$. Let $f:\dom\to\C,g:\dom\to\dom$ be analytic functions. Then the operator $W_{f,g,\max}$ is unitary on $\spc$ if and only if
\begin{gather}\label{uni-form-fg}
f(z)=\bfc\prod_{j=1}^d\dfrac{(1-|\theta_j|^2)^{1+\ell_j/2}}{(1-\bfa_j\overline{\theta_j}z_{\phi(j)})^{\ell_j+2}},\quad 
g_\kappa(z)=\dfrac{\bfa_\kappa(\overline{\bfa_\kappa}\tha_\kappa-z_{\phi(\kappa)})}
{1-\bfa_\kappa\overline{\tha_\kappa}z_{\phi(\kappa)}},
\end{gather}
where coefficients satisfy \eqref{cond-202105041603} and
\begin{gather}
    \theta=(\theta_1,\theta_2,\cdots,\theta_d)\in\D^d.
\end{gather}
\end{thm}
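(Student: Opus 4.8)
The plan is to proceed by proving the two implications separately, using Lemma~\ref{lem-univ} and Proposition~\ref{prop-202105032140} as the two main ingredients. For the direction $(\Longleftarrow)$, I would first verify by a direct change-of-variables computation that when $f,g$ are given by \eqref{uni-form-fg}, the operator $W_{f,g,\max}$ is isometric with full domain $\spc$; indeed the substitution $x_j = g_j(z)$ has a computable Jacobian that exactly cancels the weight $|f(z)|^2$ together with the Bergman density, just as in the proof of Proposition~\ref{prop-202105032140} and Lemma~\ref{lem-Cq*Kz}. Then, using Lemma~\ref{lem-W*Kz} to compute $W_{f,g,\max}^*K_z = \overline{f(z)}K_{g(z)}$, I would check the identity $W_{f,g,\max}W_{f,g,\max}^*K_z = K_z$ on kernel functions, which forces $W_{f,g,\max}^*W_{f,g,\max}=I$ by density of $\Span\{K_z\}$; combined with the isometry this gives unitarity.

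For the harder direction $(\Longrightarrow)$, assume $W_{f,g,\max}$ is unitary. The idea is to reduce to the fixed-point-at-zero situation of Lemma~\ref{lem-univ} by precomposing with an automorphism. Let $\tha = g(0) \in \dom$ and consider the unitary operator $\cala_{\tha,\max}$ from Proposition~\ref{prop-202105032140}, built from $\varphi_\tha$ and $\psi_\tha = K_\tha/\|K_\tha\|$. Then $W_{f,g,\max}\,\cala_{\tha,\max}$ is again unitary (product of unitaries), and — after identifying its symbols via the composition rule $W_{f_1,g_1}W_{f_2,g_2} = W_{(f_1\circ g_2)\cdot f_2,\; g_1\circ g_2}$ — its composition symbol is $g\circ\varphi_\tha$, which sends $0$ to $g(\varphi_\tha(0)) = g(\tha)$... so I would instead choose the automorphism the other way, writing $g = (g\circ\varphi_\tha^{-1})\circ\varphi_\tha$ with $\varphi_\tha^{-1}=\varphi_\tha$, and arrange that the new composition symbol $\widetilde g = g\circ\varphi_\tha$ satisfies $\widetilde g(0) = g(\tha)$; picking $\tha$ so that $g(\tha)=0$ via surjectivity of $g$ (which follows because a unitary $W_{f,g,\max}$ forces $g$ to be an automorphism of $\dom$). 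Once the reduced operator has composition symbol fixing $0$, Lemma~\ref{lem-univ} applies: the reduced weight is a unimodular constant and the reduced composition symbol is a coordinate permutation $z\mapsto(\bfa_\kappa z_{\phi(\kappa)})$ with $\bfa_\kappa\in\T$. Unwinding the automorphism $\varphi_\tha$ then yields exactly the stated forms \eqref{uni-form-fg}, with $\theta$ the parameter of the automorphism.

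The main obstacle I anticipate is the bookkeeping in the reduction step: one must show cleanly that a unitary $W_{f,g,\max}$ forces $g$ to be a biholomorphic automorphism of $\dom$ (so that the "translation" $\varphi_\tha$ with $g(\tha)=0$ exists), and that the operator $\cala_{\tha,\max}$ composes with $W_{f,g,\max}$ in the expected way at the level of symbols, including tracking how the kernel weight $\psi_\tha$ and the Bergman kernel powers transform. The cleanest route is probably to avoid operator composition entirely and instead apply Lemma~\ref{lem-W*Kz} to the unitary relation $W_{f,g,\max}^*W_{f,g,\max}=I$ directly: this gives $\overline{f(z)}f(u)K_{g(z)}(g(u)) = K_z(u)$ for all $z,u$, which is precisely \eqref{cond-202105021108} but \emph{without} the hypothesis $g(0)=0$. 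So the real content is to extend Lemma~\ref{lem-univ} to general $g(0)$; I would do this by setting $u=0$ to get $\overline{f(z)}f(0)K_{g(z)}(g(0)) = K_z(0) = 1$, which pins down $f$ as a constant multiple of $K_{g(0)}$-type factors in each coordinate, and then substituting back to get a functional equation for $g$ of the linear-fractional type already analyzed in Lemmas~\ref{lem-202103262022} and \ref{lem-202105291538}; solving coordinate-by-coordinate and matching the normalization constants via Lemma~\ref{202202242145} produces \eqref{uni-form-fg}. The delicate point there is confirming that the off-diagonal structure (which variable $z_{\phi(\kappa)}$ appears in $g_\kappa$) is still governed by a permutation $\phi$, which should follow from the same "highest-power-of-$x$" argument used inside the proof of Lemma~\ref{lem-univ}, applied after the automorphism has been absorbed.
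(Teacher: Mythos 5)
Your overall strategy (sufficiency by change of variables plus the kernel identity; necessity by reducing to Lemma \ref{lem-univ} via the unitary $\cala_{\tha,\max}$) is the paper's strategy, and the sufficiency half is fine. But the reduction step in the necessity half is derailed by a reversed composition rule. For operators acting as $(W_{f,g}\xi)(z)=f(z)\xi(g(z))$ one has
\begin{gather*}
(W_{f_1,g_1}W_{f_2,g_2}\xi)(z)=f_1(z)\,f_2(g_1(z))\,\xi\bigl(g_2(g_1(z))\bigr),
\end{gather*}
so the composition symbol of the product is $g_2\circ g_1$, not $g_1\circ g_2$. Consequently $W_{f,g,\max}\cala_{\tha,\max}=W_{F,G,\max}$ with $G=\varphi_\tha\circ g$ and $F=f\cdot(\psi_\tha\circ g)$, and taking $\tha=g(0)$ gives $G(0)=\varphi_\tha(g(0))=\varphi_\tha(\tha)=0$ automatically. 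This is exactly what the paper does: Lemma \ref{lem-univ} then applies to $W_{F,G,\max}$ with no need to know that $g$ is surjective or an automorphism, and unwinding $W_{f,g,\max}=W_{F,G,\max}\cala_{\tha,\max}$ (using $\cala_{\tha,\max}^2=I$) yields \eqref{uni-form-fg}. Your detour --- choosing $\tha$ with $g(\tha)=0$ and asserting that unitarity forces $g$ to be an automorphism of $\dom$ --- assumes essentially the conclusion and is not justified at that point in the argument.

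Your proposed ``cleanest route,'' namely extending Lemma \ref{lem-univ} to general $g(0)$ by working directly with $\overline{f(z)}f(u)K_{g(z)}(g(u))=K_z(u)$, is a genuinely different plan but is left with real gaps. The proof of Lemma \ref{lem-univ} leans heavily on $g(0)=0$: it first shows $g(z)=\mbh z$ is \emph{linear} and only then runs the highest-power-of-$x$ argument in \eqref{eq-202105030930} to extract the permutation $\phi$ and the unimodularity $|h_{j,p}|=1$. Without the normalization, each $g_\kappa$ is only linear fractional, the matrix argument is unavailable, and you give no replacement for the step identifying which single variable $z_{\phi(\kappa)}$ each $g_\kappa$ depends on; you acknowledge this as ``delicate'' but do not resolve it. Since the correct one-line computation of the product symbol makes the automorphism reduction work immediately, I would fix the composition rule and follow that route rather than pursue the extension of Lemma \ref{lem-univ}. (Minor point: checking $W_{f,g,\max}W_{f,g,\max}^*K_z=K_z$ on kernels yields $W_{f,g,\max}W_{f,g,\max}^*=I$, not $W_{f,g,\max}^*W_{f,g,\max}=I$; the latter comes from the isometry computation.)
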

\begin{proof}
Note that the sufficient condition can be obtained by arguments similar to those used in Proposition \ref{prop-202105032140}. We prove the necessary condition as follows. Suppose that the operator $W_{f,g,\max}$ is unitary on $\spc$. Put $\theta=g(0)$ and define
\begin{gather*}
    F=f\cdot\psi_\tha\circ g,\quad G=\varphi_\tha\circ g.
\end{gather*}
A direct computation gives $\widetilde{[F,G]}=\widetilde{[f,g]}\widetilde{[\psi_\tha,\varphi_\tha]}$ and so we get
\begin{gather*}
    W_{F,G,\max}=W_{f,g,\max}\cala_{\tha,\max}.
\end{gather*}
Using Proposition \ref{prop-202105032140}, we can show that the operator $W_{F,G,\max}$ is unitary with $G(0)=\varphi_\tha(g(0))=0$. We have
\begin{gather*}
    K_z=W_{F,G,\max}W_{F,G,\max}^*K_z=W_{F,G,\max}\left(\overline{F(z)}K_{G(z)}\right)\\
    =\overline{F(z)} F\cdot K_{G(z)}\circ G\\
    \Longrightarrow\quad K_z(u)=\overline{F(z)}F(u)K_{G(z)}(G(u))\quad\forall z,u\in\dom.
\end{gather*}
The line above allows us to use Lemma \ref{lem-univ} and the proof is complete.
\end{proof}

\section{Complex symmetry}\label{sec5}
The section studies which the symbols give rise to weighted composition operators that are complex symmetric with respect to the conjugation $\calc_{\bfp,\bfq}$. Such operators are called \emph{$\calc_{\bfp,\bfq}$-symmetric}. Like as the real symmetry, a $\calc_{\bfp,\bfq}$-symmetric weighted composition operator must be bounded. As a byproduct, we obtain the interesting fact that real symmetric weighted composition operators are $\calc_{\bfp,\bfq}$-symmetric corresponding an adapted and highly relevant selection of the parameters $\bfp,\bfq$.

To study the necessary condition for a weighted composition operator to be $\calc_{\bfp,\bfq}$-symmetric, we apply the symmetric condition to kernel functions. It turns out that the symbols generating such operators can be precisely computed.
\begin{lem}\label{lem-202105210827}
Let $d\in\Z_{\geq 1},\ell\in\Z_{\geq 0}^d$. Let $U_1,U_2$ with condition \eqref{cond-UV} and $\bfp\in\D^{|U_1|}\setminus\{0\},\bfq\in\T^{|U_2|}$. Let $\bfr\in\dom$, where
\begin{gather*}
    \bfr_t=
    \begin{cases}
    \bfp_t\quad\text{if $t\in U_1$},\\
    0\quad\text{if $t\in U_2$}.
    \end{cases}
\end{gather*}
Let $f:\dom\to\C,g=(g_1,g_2,\cdots,g_d):\dom\to\dom$ be analytic functions. Suppose that
\begin{gather}\label{eq-202105261537}
    \overline{\vartheta_{\bfp,U}(z)}\vartheta_{\bfp,U}(g(\overline{\omega(z)}))f(\overline{\omega(z)})
    K_{\overline{\omega(g(\overline{\omega(z)}))}}(u)
    =f(u)K_z(g(u))\quad\forall z,u\in\D^d.
\end{gather}
Then the following assertions hold.
\begin{enumerate}
    \item The functions are of forms
    \begin{gather}\label{eq-202105261617}
    f(u)=\widetilde{c}K_{\overline{\ome(g(\bfr))}}(u),\quad 
    g_\kappa(u_\kappa)=
    \begin{cases}
    G_\kappa+\dfrac{E_\kappa}{u_\kappa+F_\kappa}
    \quad\text{if $\kappa\in U_1$}\\
    \\
    \al_\kappa+\dfrac{\beta_\kappa\bfq_\kappa u_\kappa}{1-\al_\kappa\bfq_\kappa u_\kappa}\quad\text{if $\kappa\in U_2$},
    \end{cases}
\end{gather}
where coefficients satisfy
\begin{gather}
\label{202208121603}G_\kappa\in\D\quad\text{if $\kappa\in U_1,E_\kappa=0$},\\
    \label{cond-202106030907}p-G_\kappa|p_\kappa|^2=-F_\kappa|p_\kappa|^2+(G_\kappa F_\kappa+E_\kappa)\overline{p_\kappa}\quad\text{if $\kappa\in U_1,E_\kappa\ne 0$},\\
    \label{202208121558}|(G_\kappa F_\kappa+E_\kappa)\overline{F_\kappa}-G_\kappa|+|E_\kappa|\leq|F_\kappa|^2-1\quad\text{if $\kappa\in U_1,E_\kappa\ne 0$},\\
    \label{202208121628}\alpha_\kappa\in\D\quad\text{if $\kappa\in U_2,\beta_\kappa=0$},\\
\label{cond-202105210836}
    |\al_\kappa+\overline{\al_\kappa}(\beta_\kappa-\al_\kappa^2)|+|\beta_\kappa|\leq 1-|\alpha_\kappa|^2\quad\text{if $\kappa\in U_2,\beta_\kappa\ne 0$}.
\end{gather}
\item If the functions are given in item (1), then $W_{f,g,\max}$ is bounded on $\spc$.
\end{enumerate}
\end{lem}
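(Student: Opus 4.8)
The plan is to mirror the proof of Lemma~\ref{lem-her-20210505}: first exploit a special value of $z$ to pin down $f$, then substitute the resulting form back into \eqref{eq-202105261537} and run the differentiation/coefficient-comparison machinery to reduce the equation governing each component $g_\kappa$ to one of the one-variable functional equations already solved in Lemmas~\ref{lem-202105291538} and~\ref{lem-202105202050}; item~(2) will then follow from the change-of-variables argument of Lemma~\ref{lem-her-20210505}(2).

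Concretely, since $\ome_\kappa(0)=\overline{\bfp_\kappa}$ for $\kappa\in U_1$ and $\ome_\kappa(0)=0$ for $\kappa\in U_2$, we have $\overline{\ome(0)}=\bfr$. Putting $z=0$ in \eqref{eq-202105261537}, using $K_0\equiv 1$ and $\vartheta_{\bfp,U}(0)=\prod_{j\in U_1}(1-|\bfp_j|^2)^{1+\ell_j/2}\neq 0$, yields at once $f(u)=\widetilde{c}\,K_{\overline{\ome(g(\bfr))}}(u)$ with $\widetilde{c}=\overline{\vartheta_{\bfp,U}(0)}\,\vartheta_{\bfp,U}(g(\bfr))\,f(\bfr)$, which is the first half of \eqref{eq-202105261617}; in particular $f$ is bounded on $\dom$, being a constant multiple of a reproducing kernel. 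Substituting this expression back into \eqref{eq-202105261537}, cancelling $\widetilde{c}$, and writing the kernels out as products, one obtains an identity in $z,u$ whose $u$-dependence on the left is only through $\prod_j(1-u_j\,\ome_j(g_j(\overline{\ome(z)})))^{-(\ell_j+2)}$. Taking $\pa_{u_\kappa}$, dividing by the identity itself, then taking a second derivative $\pa_{u_s}$ with $s\neq\kappa$ and comparing coefficients, forces $\pa_{u_s}g_\kappa\equiv 0$, exactly as in the Claim inside the proof of Lemma~\ref{lem-her-20210505}; hence every $g_\kappa$ is a function of $u_\kappa$ alone (Lemma~\ref{20220224} handling the combining of these $\kappa$-by-$\kappa$ conclusions if one wishes to spell it out).

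Write $g_\kappa(u)=\psi(u_\kappa)$. Feeding the explicit formulas for $\ome_\kappa$ and $\vartheta_{\bfp,U}$ into the simplified identity and extracting $(\ell_\kappa+2)$-th roots, the $\kappa$-th factor decouples into a one-variable functional equation for $\psi$. For $\kappa\in U_2$, where $\ome_\kappa(z_\kappa)=\bfq_\kappa z_\kappa$, this is precisely \eqref{eq-202105202050} with $\tha=\bfq_\kappa$, so Lemma~\ref{lem-202105202050} delivers the stated form of $g_\kappa$ together with \eqref{202208121628} in the constant sub-case $\beta_\kappa=0$ and \eqref{cond-202105210836} when $\beta_\kappa\neq 0$. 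For $\kappa\in U_1$, where $\ome_\kappa=\ome_{\bfp_\kappa}$, it is \eqref{eq-202105270751} with $p=\bfp_\kappa$, so Lemma~\ref{lem-202105291538} gives either $g_\kappa\equiv\text{const}$, recording \eqref{202208121603}, or the form \eqref{202208121548} with \eqref{cond-202106030907} and \eqref{202208121558}. This proves item~(1).

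For item~(2), since $f$ is already known to be a bounded analytic function, it suffices by Corollary~\ref{cor-bdd} to show $\text{dom}(C_{g,\max})=\spc$. Partition $\zd$ into the set $\Omega$ of coordinates on which $g_\kappa$ is constant and the set $\Delta$ on which $g_\kappa$ is a genuine linear-fractional self-map of $\D$; writing $z=(z_\Omega,z_\Delta)$ as in Lemma~\ref{lem-her-20210505}(2), one has $C_{g,\max}h(z)=h_{\bfa_\Omega}(g_\Delta(z_\Delta))$ where $g_\Delta$ acts coordinatewise with non-vanishing Jacobian $\prod_{j\in\Delta}|g_j'(z_j)|^2$, so \cite[Theorem~10]{zbMATH05285682} applies on the $\Delta$-coordinates and Lemma~\ref{lem-202106091438} absorbs the restriction to the slice $z_\Omega=\bfa_\Omega$; combining the two gives boundedness of $C_{g,\max}$, hence of $W_{f,g,\max}$. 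The main obstacle throughout is the bookkeeping in the reduction step: one must check that, after inserting the explicit $\ome_\kappa$, $\vartheta_{\bfp,U}$ and the kernel index $\overline{\ome(g(\bfr))}$ of $f$, the $\kappa$-th factor of the product identity collapses to exactly \eqref{eq-202105270751} or \eqref{eq-202105202050} with all cross terms matching, and that the degenerate sub-cases $E_\kappa=0$ and $\beta_\kappa=0$ are separated out so as to produce the weaker constraints \eqref{202208121603} and \eqref{202208121628}. Everything else is a routine adaptation of the real-symmetric argument.
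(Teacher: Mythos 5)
Your proposal is correct and follows essentially the same route as the paper: pin down $f$ via a special substitution (your choice $z=0$, using $\overline{\ome(0)}=\bfr$ and $K_0\equiv 1$, is a slightly more direct version of the paper's step of differentiating, dividing, and then setting $y=\bfr$ after the change of variable $z=\overline{\ome(y)}$), then show each $g_\kappa$ depends on $u_\kappa$ alone by the two-fold differentiation/coefficient-comparison argument, reduce the $\kappa$-th factor to the one-variable equations \eqref{eq-202105270751} (for $\kappa\in U_1$) and \eqref{eq-202105202050} (for $\kappa\in U_2$) solved by Lemmas \ref{lem-202105291538} and \ref{lem-202105202050}, and prove boundedness by the same $\Omega/\Delta$ decomposition with the Jacobian criterion and Lemma \ref{lem-202106091438}. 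The only compression worth flagging is that isolating the $\kappa$-th summand before the second differentiation requires an intermediate derivative in the $z$- (or $y$-) variable, exactly as in the Claim of Lemma \ref{lem-her-20210505} that you cite, so nothing essential is missing.
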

\begin{proof}
(1) Setting $z=\overline{\omega(y)}$ in \eqref{eq-202105261537}, we find
\begin{gather*}
    \overline{\vartheta_{\bfp,U}(\overline{\ome(y)})}\vartheta_{\bfp,U}(g(y))f(y)K_{\overline{\ome(g(y))}}(u)
    =f(u)K_{\overline{\ome(y)}}(g(u))\quad\forall y,u\in\dom.
\end{gather*}
Consequently, taking into account the explicit form of kernel functions, we get
\begin{gather}
    \nonumber f(y)
    \prod_{j\in U_1}(1-|\bfp_j|^2)^{\ell_j+2}\prod_{j=1}^d[1-g_j(u)\ome_j(y_j)]^{\ell_j+2}\\
    \label{eq-202105261608}=f(u)\prod_{j\in U_1}(1-\overline{\bfp_j}g_j(y))^{\ell_j+2}(1-\bfp_j\ome_j(y_j))^{\ell_j+2}
    \prod_{j=1}^d[1-u_j\ome_j(g_j(y))]^{\ell_j+2}.
\end{gather}
Let $\kappa\in\zd$. After differentiating with respect to the variable $u_\kappa$, the equation above becomes
\begin{gather}
    \nonumber -f(y)\prod_{j\in U_1}(1-|\bfp_j|^2)^{\ell_j+2}
    \sum_{t=1}^d(\ell_t+2)[1-g_t(u)\ome_t(y_t)]^{\ell_t+1}\ome_t(y_t)\pa_{u_\kappa}(g_t(u))\\
    \nonumber\times\prod_{j\ne t}[1-g_j(u)\ome_j(y_j)]^{\ell_j+2}\\
    \nonumber=\pa_{u_\kappa}(f(u))\prod_{j\in U_1}(1-\overline{\bfp_j}g_j(y))^{\ell_j+2}(1-\bfp_j\ome_j(y_j))^{\ell_j+2}
    \prod_{j=1}^d[1-u_j\ome_j(g_j(y))]^{\ell_j+2}\\
    \nonumber-f(u)\prod_{j\in U_1}(1-\overline{\bfp_j}g_j(y))^{\ell_j+2}(1-\bfp_j\ome_j(y_j))^{\ell_j+2}\\
    \label{eq-202105261609}\times(\ell_\kappa+2)[1-u_\kappa\ome_\kappa(g_\kappa(y))]^{\ell_\kappa+1}
    \ome_\kappa(g_\kappa(y))
    \prod_{j\ne\kappa}[1-u_j\ome_j(g_j(y))]^{\ell_j+2}.
\end{gather}
Equation \eqref{eq-202105261609} divided by \eqref{eq-202105261608} is equal to
\begin{gather}\label{eq-202105261625}
    -\sum_{t=1}^d(\ell_t+2)\dfrac{\ome_t(y_t)\pa_{u_\kappa}(g_t(u))}{1-g_t(u)\ome_t(y_t)}
    =\dfrac{\pa_{u_\kappa}(f(u))}{f(u)}
    -\dfrac{(\ell_\kappa+2)\ome_\kappa(g_\kappa(y))}{1-u_\kappa\ome_\kappa(g_\kappa(y))}.
\end{gather}
Setting $y=\bfr$, we observe
\begin{gather*}
    \dfrac{\pa_{u_\kappa}(f(u))}{f(u)}
    =\dfrac{(\ell_\kappa+2)\ome_\kappa(g_\kappa(\bfr))}{1-u_\kappa\ome_\kappa(g_\kappa(\bfr))}.
\end{gather*}
Using the arguments similar to \eqref{eq-202103281033}, the function $f(\cdot)$ is of form as in \eqref{eq-202105261617}. Setting $\xi=\ome_\kappa\circ g_\kappa$, \eqref{eq-202105261625} is rewritten as
\begin{gather}\label{eq-202106022212}
    -\sum_{t=1}^d(\ell_t+2)\dfrac{\ome_t(y_t)\pa_{u_\kappa}(g_t(u))}{1-g_t(u)\ome_t(y_t)}
    =\dfrac{\pa_{u_\kappa}(f(u))}{f(u)}
    -\dfrac{(\ell_\kappa+2)\xi(y)}{1-u_\kappa\xi(y)}.
\end{gather}
We differentiate \eqref{eq-202106022212} with respect to the variable $y_{\kappa}$, where $j\in\{1,2\}$, and then
\begin{gather}
\label{eq-202105261827}
    \dfrac{\ome_\kappa^{\odot}(y_\kappa)\pa_{u_\kappa}(g_\kappa(u))}{(1-\ome_\kappa(y_\kappa)g_\kappa(u))^2}
    =\dfrac{\pa_{y_{\kappa}}(\xi(y))}{(1-u_\kappa\xi(y))^2}
\end{gather}
or equivalently to saying that
\begin{gather}
    \label{eq-202105261815}\ome_\kappa^{\odot}(y_\kappa)\pa_{u_\kappa}(g_\kappa(u))(1-u_\kappa\xi(y))^2
    =\pa_{y_{\kappa}}(\xi(y))(1-\ome_\kappa(y_\kappa)g_\kappa(u))^2,
\end{gather}
where we denote
\begin{gather*}
    \ome_\kappa^{\odot}(y_\kappa)=
    \begin{cases}
    \dfrac{\overline{\bfp_{\kappa}}}{\bfp_\kappa}\cdot\dfrac{|\bfp_\kappa|^2-1}
    {(1-\overline{\bfp_\kappa}z_{\kappa})^2}\quad\text{if $\kappa\in U_1$},\\
    \bfq_\kappa\quad\text{if $\kappa\in U_2$}.
    \end{cases}
\end{gather*}

{\bf Claim 1:} $g_\kappa(\cdot)$ is a function of one variable $u_\kappa$.

Let $s\in\zd\setminus\{\kappa\}$. Taking derivative $\pa_{u_s}$ on both sides of \eqref{eq-202105261815} gives
\begin{gather}
    \nonumber\ome_\kappa^{\odot}(y_\kappa)\pa_{u_s}\circ\pa_{u_\kappa}(g_\kappa(u))(1-u_\kappa\xi(y))^2\\
    \label{eq-202105261816}
    =-\pa_{y_{\kappa}}(\xi(y))2(1-\ome_\kappa(y_\kappa)g_\kappa(u))\ome_\kappa(y_\kappa)\pa_{u_s}(g_\kappa(u)).
\end{gather}
If there is $u_\star\in\dom$ for which $\pa_{u_\kappa}(g_\kappa(u_\star))=0$, then \eqref{eq-202105261815} gives
\begin{gather*}
    \pa_{y_{\kappa}}(\xi(y))=0\quad\forall y\in\dom,
\end{gather*}
and hence by \eqref{eq-202105261816},
\begin{gather*}
    \pa_{u_s}\circ\pa_{u_\kappa}(g_\kappa(u))=0\quad\forall u\in\dom;
\end{gather*}
meaning that $g_\kappa(\cdot)$ is a function of one variable $u_\kappa$. Now consider the situation when $\pa_{u_\kappa}g_\kappa(\cdot)\not\equiv 0$. Equation \eqref{eq-202105261816} divided by \eqref{eq-202105261815} is equal to the following
\begin{gather*}
    \dfrac{\pa_{u_s}\circ\pa_{u_\kappa}(g_\kappa(u))}{\pa_{u_\kappa}(g_\kappa(u))}
    =-\dfrac{2\ome_\kappa(y_\kappa)\pa_{u_s}(g_\kappa(u))}{1-\ome_\kappa(y_\kappa)g_\kappa(u)},
\end{gather*}
which implies, after equating coefficients of $\ome_\kappa(y_\kappa)$, that
\begin{gather*}
    \pa_{u_s}\circ\pa_{u_\kappa}(g_\kappa(u))=0=\pa_{u_s}(g_\kappa(u));
\end{gather*}
meaning that $g_\kappa(\cdot)$ is a function of one variable $u_\kappa$.

{\bf Claim 2:} $\xi(\cdot)$ is a function of one variable $y_{\kappa}$, too.

Let $t\in\zd\setminus\{\kappa\}$. We proceed into taking derivative $\pa_{y_t}$ on both sides of \eqref{eq-202105261827}
\begin{gather*}
    0=\pa_{y_t}\left(\dfrac{\ome_\kappa^{\odot}(y_\kappa)\pa_{u_\kappa}(g_\kappa(u))}{(1-\ome_\kappa(y_\kappa)g_\kappa(u))^2}\right)
    =\pa_{y_t}\left(\dfrac{\pa_{y_{\kappa}}(\xi(y))}{(1-u_\kappa\xi(y))^2}\right)\\
    \Longrightarrow
    0=\pa_{y_t}\circ\pa_{y_{\kappa}}(\xi(y))(1-u_\kappa\xi(y))
    +2u_\kappa\pa_{y_{\kappa}}(\xi(y))\pa_{y_t}(\xi(y)).
\end{gather*}
After equating coefficients of $u_\kappa$, we get
\begin{gather*}
    \pa_{y_t}\circ\pa_{y_{\kappa}}(\xi(y))=0=\pa_{y_{\kappa}}(\xi(y))\cdot\pa_{y_t}(\xi(y));
\end{gather*}
meaning that $\xi(\cdot)$ is a function of one variable $y_{\kappa}$. Thus, we make use of Lemma \ref{lem-202105202050} (when $\kappa\in U_2$) and Lemma \ref{lem-202105291538} (when $\kappa\in U_1$).

(2) The part is the same as those of Lemma \ref{lem-her-20210505}(2) but we give a proof, for a completeness of exposition. Since the function $f(\cdot)$ is bounded, it is enough to show that the composition operator $C_{g,\max}$ is bounded. Before proving this, we fix some symbols used. Denote
\begin{gather*}
    \Omega=\{j\in\zd:g_j\equiv\text{const}\},\quad\Delta=\left\{j\in U_1:g_j\not\equiv\text{const}\right\},\quad\Xi=\left\{j\in U_2:g_j\not\equiv\text{const}\right\}.
\end{gather*}
Suppose that $n_1,n_2,\cdots,n_t$ and $m_1,m_2,\cdots,m_s$ and $c_1,c_2,\cdots,c_r$ are elements of $\Omega,\Delta,\Xi$, respectively; meaning
\begin{gather*}
    \Omega=\{n_1,n_2,\cdots,n_t\},\quad\Delta=\{m_1,m_2,\cdots,m_s\},\quad\Xi=\{c_1,c_2,\cdots,c_r\}.
\end{gather*}
For each $z=(z_1,z_2,\cdots,z_d)\in\C^d$, we express $z=(z_\Omega,z_\Delta,z_\Xi)$, where $z_\Omega=(z_{n_1},z_{n_2},\cdots,z_{n_t})$ and $z_\Delta=(z_{m_1},z_{m_2},\cdots,z_{m_s})$ and $z_\Xi=(z_{c_1},z_{c_2},\cdots,z_{c_r})$. Then $g(z)=(\bfa_\Omega,g_\Delta(z_\Delta),g_\Xi(z_\Xi))$ and 
\begin{gather*}
    C_{g,\max}h(z)=h(\bfa_\Omega,g_\Delta(z_\Delta),g_\Xi(z_\Xi))=h_{\bfa_\Omega}(g_\Delta(z_\Delta),g_\Xi(z_\Xi))=C_{(g_\Delta,g_\Xi),\max}h_{\bfa_\Omega}(z_\Delta,z_\Xi).
\end{gather*}
Note that the Jacobian determinant of $(g_\Delta,g_\Xi)$ is
\begin{gather*}
    J=\prod_{j\in\Delta}\left|\dfrac{E_j}{(z_j+F_j)^2}\right|^2\cdot\prod_{\kappa\in\Xi}\left|\dfrac{\beta_\kappa}{(1-\alpha_\kappa\bfq_\kappa z_\kappa)^2}\right|^2\ne 0,
\end{gather*}
so by \cite[Theorem 10]{zbMATH05285682}, the operator $C_{g_\Delta,\max}$ is bounded and together with Lemma \ref{lem-202106091438}, we get the boundedness of the operator $C_{g,\max}$.
\end{proof}

Lemma \ref{lem-202105210827} provides a necessary condition for a weighted composition operator to be $\calc_{\bfp,\bfq}$-symmetric. It turns out that the assertion in Lemma \ref{lem-202105210827} is also a sufficient condition.
\begin{thm}\label{thm-cs-1}
Let $d\in\Z_{\geq 1},\ell\in\Z_{\geq 0}^d$. Let $U_1,U_2$ with condition \eqref{cond-UV} and $\bfp\in\D^{|U_1|}\setminus\{0\},\bfq\in\T^{|U_2|}$. Suppose that $f:\dom\to\C,g=(g_1,g_2,\cdots,g_d):\dom\to\dom$ are analytic functions. Then the operator $W_{f,g,\max}$ is $\calc_{\bfp,\bfq}$-symmetric on $\spc$ if and only if the functions $f(\cdot),g(\cdot)$ are of forms in \eqref{eq-202105261617}, where coefficients verify \eqref{202208121603}-\eqref{cond-202105210836}. In this case, the operator $W_{f,g,\max}$ is bounded.
\end{thm}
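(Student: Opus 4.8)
The plan is to prove Theorem \ref{thm-cs-1} by mirroring the structure of the real-symmetry argument in Section \ref{sec3}, replacing the role of Lemma \ref{lem-her-20210505} with Lemma \ref{lem-202105210827}. The content splits into the two implications, and the bulk of the work has already been isolated into the preparatory lemmas.

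For the necessity (``$\Longrightarrow$''), suppose $W_{f,g,\max}$ is $\calc_{\bfp,\bfq}$-symmetric, i.e.\ $W_{f,g,\max}=\calc_{\bfp,\bfq}W_{f,g,\max}^*\calc_{\bfp,\bfq}$. First I would note that $W_{f,g,\max}$ is then densely defined (in fact $\calc_{\bfp,\bfq}$-symmetry forces, via the closed-graph property from the Proposition before Corollary \ref{cor-bdd} together with $\calc_{\bfp,\bfq}$ being a bounded involution, that $\text{dom}(W_{f,g,\max})$ is dense), so Lemma \ref{lem-W*Kz} applies and $W_{f,g,\max}^*K_z=\overline{f(z)}K_{g(z)}$ for all $z\in\dom$. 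Applying the symmetry identity to a reproducing kernel $K_z$ and unwinding the definition of $\calc_{\bfp,\bfq}$ and Lemma \ref{lem-Cq*Kz} (which gives $\calc_{\bfp,\bfq}K_z=\vartheta_{\bfp,U}(z)K_{\overline{\ome(z)}}$), one computes
\begin{gather*}
    W_{f,g,\max}K_z=\calc_{\bfp,\bfq}W_{f,g,\max}^*\calc_{\bfp,\bfq}K_z
    =\calc_{\bfp,\bfq}W_{f,g,\max}^*\big(\vartheta_{\bfp,U}(z)K_{\overline{\ome(z)}}\big)
    =\vartheta_{\bfp,U}(z)\,\calc_{\bfp,\bfq}\big(\overline{f(\overline{\ome(z)})}K_{g(\overline{\ome(z)})}\big),
\end{gather*}
and the left side is $f(\cdot)K_z(g(\cdot))$ while the right side is $\vartheta_{\bfp,U}(z)\overline{\vartheta_{\bfp,U}(g(\overline{\ome(z)}))}\,f(\overline{\ome(z)})^{-}{}^{-}\cdots$ — after carefully tracking the two conjugations (the inner conjugate in $\calc_{\bfp,\bfq}$ cancels the conjugate of $\overline{f}$) one arrives exactly at the functional equation \eqref{eq-202105261537}. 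Then Lemma \ref{lem-202105210827}(1) yields the forms \eqref{eq-202105261617} and the coefficient constraints \eqref{202208121603}--\eqref{cond-202105210836}, and Lemma \ref{lem-202105210827}(2) gives boundedness.

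For the sufficiency (``$\Longleftarrow$''), take $f,g$ of the form \eqref{eq-202105261617} with the stated constraints. Lemma \ref{lem-202105210827}(2) already guarantees $W_{f,g,\max}$ is bounded, hence everywhere defined with $\text{dom}=\spc$. To prove $\calc_{\bfp,\bfq}$-symmetry it suffices, since $\calc_{\bfp,\bfq}$ is a conjugation and the span of kernels is dense, to verify $W_{f,g,\max}\calc_{\bfp,\bfq}K_z=\calc_{\bfp,\bfq}W_{f,g,\max}^*K_z$ for all $z$, equivalently to run the kernel computation above in reverse: show that the specific coefficient relations \eqref{202208121603}--\eqref{cond-202105210836} are precisely what make the functional equation \eqref{eq-202105261537} hold identically. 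This is the converse direction of Lemmas \ref{lem-202103262022}, \ref{lem-202105202050}, \ref{lem-202105291538}: each of those lemmas, when its conclusion holds, makes the corresponding symbol equation (\eqref{eq-202103262022}, \eqref{eq-202105202050}, \eqref{eq-202105270751}) true, and multiplying these one-variable identities over $\kappa$ (together with the identity $\vartheta_{\bfp,U}(z)\overline{\vartheta_{\bfp,U}(\overline{\ome(z)})}=1$ from the proof of Lemma \ref{lem-Cq*Kz}) reassembles \eqref{eq-202105261537}.

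The main obstacle I anticipate is purely bookkeeping: correctly composing the two antilinear operations in $\calc_{\bfp,\bfq}W_{f,g,\max}^*\calc_{\bfp,\bfq}$ and matching the weight factor $\vartheta_{\bfp,U}$ against the $(1-|\bfp_j|^2)^{\ell_j+2}$ and $(1-\overline{\bfp_j}g_j(y))^{\ell_j+2}(1-\bfp_j\ome_j(y_j))^{\ell_j+2}$ terms that appear in \eqref{eq-202105261608}, so that the substitution $z=\overline{\ome(y)}$ in \eqref{eq-202105261537} genuinely produces the identity Lemma \ref{lem-202105210827} starts from. Once the equivalence ``$\calc_{\bfp,\bfq}$-symmetric $\iff$ \eqref{eq-202105261537}'' is nailed down, both implications are immediate from Lemma \ref{lem-202105210827}. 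The boundedness clause is then just a restatement of Lemma \ref{lem-202105210827}(2) via Corollary \ref{cor-bdd}.
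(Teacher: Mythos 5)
Your proposal is correct and follows essentially the same route as the paper: reduce the $\calc_{\bfp,\bfq}$-symmetry identity to the kernel functional equation \eqref{eq-202105261537} via Lemma \ref{lem-W*Kz} and Lemma \ref{lem-Cq*Kz}, then invoke Lemma \ref{lem-202105210827} for the forms, the coefficient constraints, and boundedness, with the sufficiency obtained by verifying the identity on the dense span of kernels. The paper's own proof is exactly this two-step reduction, stated just as briefly.
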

\begin{proof}
Suppose that the operator $W_{f,g,\max}$ is $\calc_{\bfp,\bfq}$-symmetric on $\spc$, which gives
\begin{gather*}
    \calc_{\bfp,\bfq}W_{f,g,\max}^*\calc_{\bfp,\bfq}=W_{f,g,\max}.
\end{gather*}
In particular, the following is obtained
\begin{gather}\label{eq-202105201542}
    \calc_{\bfp,\bfq}W_{f,g,\max}^*\calc_{\bfp,\bfq}K_z=W_{f,g,\max}K_z\quad\forall z\in\dom.
\end{gather}
By Lemma \ref{lem-W*Kz}, \eqref{eq-202105201542} becomes \eqref{eq-202105261537} and so we can make use of Lemma \ref{lem-202105210827} to get the necessary condition.

For the sufficient condition, take $f(\cdot),g(\cdot)$ as in the statement of the theorem. Lemma \ref{lem-202105210827} shows that the operator $W_{f,g,\max}$ is bounded. By \eqref{eq-202105261617}-\eqref{cond-202105210836} and Lemma \ref{lem-W*Kz}, the operator verifies \eqref{eq-202105201542} and so it must be $\calc_{\bfp,\bfq}$-symmetric.
\end{proof}

Theorem \ref{thm-cs-1} characterizes maximal weighted composition operators that are $\calc_{\bfp,\bfq}$-symmetric. The following theorem proves that the maximal domain and boundedness are consequences of the $\calc_{\bfp,\bfq}$-symmetry.
\begin{thm}\label{thm-cs-2}
Let $d\in\Z_{\geq 1},\ell\in\Z_{\geq 0}^d$. Let $U_1,U_2$ with condition \eqref{cond-UV} and $\bfp\in\D^{|U_1|}\setminus\{0\},\bfq\in\T^{|U_2|}$. Suppose that $f:\dom\to\C,g=(g_1,g_2,\cdots,g_d):\dom\to\dom$ are analytic functions. Then the operator $W_{f,g}$ is $\calc_{\bfp,\bfq}$-symmetric on $\spc$ if and only if it verifies two assertions.
\begin{enumerate}
\item \eqref{202208121603}-\eqref{cond-202105210836} hold. 
    \item The operator $W_{f,g}$ is maximal; that is $W_{f,g}=W_{f,g,\max}$.
\end{enumerate}
In this case, the operator $W_{f,g}$ is bounded.
\end{thm}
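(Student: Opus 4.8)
The plan is to mirror the pattern of Theorem \ref{thm-her-2}, with $\calc_{\bfp,\bfq}$-symmetry in place of real symmetry and with Lemma \ref{lem-202105210827} and Theorem \ref{thm-cs-1} playing the roles of Lemma \ref{lem-her-20210505} and Theorem \ref{thm-her-1}. The implication ``$\Longleftarrow$'' is immediate: assertion (2) gives $W_{f,g}=W_{f,g,\max}$, and Theorem \ref{thm-cs-1} applied under assertion (1) shows $W_{f,g,\max}$ is $\calc_{\bfp,\bfq}$-symmetric and bounded; hence so is $W_{f,g}$.

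For ``$\Longrightarrow$'', assume $W_{f,g}$ is $\calc_{\bfp,\bfq}$-symmetric, that is $W_{f,g}=\calc_{\bfp,\bfq}W_{f,g}^{*}\calc_{\bfp,\bfq}$; in particular $W_{f,g}$ is densely defined, and therefore so is $W_{f,g,\max}$ (because $W_{f,g}\preceq W_{f,g,\max}$). I would start from $W_{f,g}\preceq W_{f,g,\max}$, pass to adjoints (which reverses $\preceq$) to get $W_{f,g,\max}^{*}\preceq W_{f,g}^{*}$, and then conjugate by $\calc_{\bfp,\bfq}$. Because $\calc_{\bfp,\bfq}$ is a conjugate-linear isometric involution, the map $X\mapsto\calc_{\bfp,\bfq}X\calc_{\bfp,\bfq}$ preserves the relation $\preceq$, so
\begin{gather*}
\calc_{\bfp,\bfq}W_{f,g,\max}^{*}\calc_{\bfp,\bfq}\preceq\calc_{\bfp,\bfq}W_{f,g}^{*}\calc_{\bfp,\bfq}=W_{f,g}\preceq W_{f,g,\max}.
\end{gather*}
By Lemma \ref{lem-Cq*Kz} every $\calc_{\bfp,\bfq}K_z$ is a scalar multiple of a reproducing kernel, which by Lemma \ref{lem-W*Kz} lies in $\text{dom}(W_{f,g,\max}^{*})$; hence each $K_z$ lies in $\text{dom}(\calc_{\bfp,\bfq}W_{f,g,\max}^{*}\calc_{\bfp,\bfq})$, and evaluating the two extreme ends of the display on $K_z$ (using Lemmas \ref{lem-W*Kz} and \ref{lem-Cq*Kz} to compute each side explicitly) reproduces exactly the kernel identity \eqref{eq-202105261537}. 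Lemma \ref{lem-202105210827} then delivers the symbol forms \eqref{eq-202105261617} together with \eqref{202208121603}--\eqref{cond-202105210836}, which is assertion (1).

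With (1) in hand, Theorem \ref{thm-cs-1} shows $W_{f,g,\max}$ is $\calc_{\bfp,\bfq}$-symmetric, so $W_{f,g,\max}=\calc_{\bfp,\bfq}W_{f,g,\max}^{*}\calc_{\bfp,\bfq}$, and the display above collapses to
\begin{gather*}
W_{f,g}\preceq W_{f,g,\max}=\calc_{\bfp,\bfq}W_{f,g,\max}^{*}\calc_{\bfp,\bfq}\preceq\calc_{\bfp,\bfq}W_{f,g}^{*}\calc_{\bfp,\bfq}=W_{f,g},
\end{gather*}
forcing $W_{f,g}=W_{f,g,\max}$, which is assertion (2); boundedness is then part of Theorem \ref{thm-cs-1}. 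I expect no serious obstacle here: all the genuine work has already been done in Lemma \ref{lem-202105210827} (the symbol computation and the boundedness estimate via \cite{zbMATH05285682} and Lemma \ref{lem-202106091438}) and in Theorem \ref{thm-cs-1} (the sufficiency direction). The only point requiring care is the unbounded-operator bookkeeping — that adjunction reverses $\preceq$, that conjugation by the conjugate-linear $\calc_{\bfp,\bfq}$ preserves $\preceq$ and maps kernels to scalar multiples of kernels, and that the composite $\calc_{\bfp,\bfq}W_{f,g,\max}^{*}\calc_{\bfp,\bfq}$ is actually defined at every $K_z$ — but each of these is routine given Lemmas \ref{lem-W*Kz} and \ref{lem-Cq*Kz}.
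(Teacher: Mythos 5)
Your proposal is correct and follows essentially the same route as the paper's own proof: the same chain $\calc_{\bfp,\bfq}W_{f,g,\max}^{*}\calc_{\bfp,\bfq}\preceq W_{f,g}\preceq W_{f,g,\max}$, evaluation on kernel functions to recover \eqref{eq-202105261537}, then Lemma \ref{lem-202105210827} and Theorem \ref{thm-cs-1} to obtain (1) and collapse the chain into (2). The only difference is that you spell out the domain bookkeeping for $\calc_{\bfp,\bfq}W_{f,g,\max}^{*}\calc_{\bfp,\bfq}$ at $K_z$ via Lemma \ref{lem-Cq*Kz}, which the paper leaves implicit.
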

\begin{proof}
The implication $``\Longleftarrow"$ is proven in Theorem \ref{thm-cs-1} and the remaining task is to prove the implication $``\Longrightarrow"$. Indeed, since $W_{f,g}\preceq W_{f,g,\max}$, by \cite[Proposition 1.6]{KS}, we have
$$
\calc_{\bfp,\bfq}W_{f,g,\max}^*\calc_{\bfp,\bfq}\preceq\calc_{\bfp,\bfq}W_{f,g}^*\calc_{\bfp,\bfq}=W_{f,g}\preceq W_{f,g,\max}.
$$
Lemma \ref{lem-W*Kz} shows that $K_z\in\text{dom}(W_{f,g,\max}^*)$, and so,
$$
\calc_{\bfp,\bfq}W_{f,g,\max}^*\calc_{\bfp,\bfq}K_z(u)=W_{f,g,\max}K_z(u)\quad\forall z,u\in\dom.
$$
By Lemmas \ref{lem-W*Kz} and \ref{lem-202105210827}, conditions \eqref{eq-202105261617}-\eqref{cond-202105210836} hold, and hence, by Theorem \ref{thm-cs-1}, the operator $W_{f,g,\max}$ is $\calc_{\bfp,\bfq}$-symmetric. Using this, item (2) is proven as follows
$$
W_{f,g}\preceq W_{f,g,\max}=\calc_{\bfp,\bfq}W_{f,g,\max}^*\calc_{\bfp,\bfq}\preceq \calc_{\bfp,\bfq}W_{f,g}^*\calc_{\bfp,\bfq}=W_{f,g}.
$$
\end{proof}

\begin{cor}
Let $d\in\Z_{\geq 1}$ and $\ell\in\Z_{\geq 0}^d$. Let $f:\dom\to\C,g=(g_1,g_2,\cdots,g_d):\dom\to\dom$ be analytic functions. If the operator $W_{f,g}$ is real symmetric, then it is complex symmetric.
\end{cor}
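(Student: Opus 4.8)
The plan is to route through the explicit description of real symmetric weighted composition operators and then manufacture, canonically from the symbols, a conjugation of the form $\calc_{\bfp,\bfq}$ under which the operator is complex symmetric. First I would invoke Theorem~\ref{thm-her-2}: real symmetry of $W_{f,g}$ forces $W_{f,g}=W_{f,g,\max}$, forces this operator to be bounded with domain $\spc$, and forces the symbols to have the form~\eqref{form-f-her}, i.e.\ $f=\bfc K_{\bfa}$ and $g_\kappa(z)=\bfa_\kappa+\bfb_\kappa z_\kappa/(1-\overline{\bfa_\kappa}z_\kappa)$ for all $\kappa\in\zd$, where $\bfc\in\R$, $\bfb\in\R^d$, $\bfa\in\dom$, and \eqref{her-cond-1}--\eqref{her-cond-2} hold; in particular $W_{f,g}^{*}=W_{f,g}$.

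Next I would single out the conjugation. Take the partition $U_1=\emptyset$, $U_2=\zd$ (so $\vartheta_{\bfp,U}\equiv 1$ and there is nothing to specify for $\bfp$), and set $\bfq=(\bfq_1,\dots,\bfq_d)\in\T^{d}$ by $\bfq_\kappa=\overline{\bfa_\kappa}/\bfa_\kappa$ if $\bfa_\kappa\ne 0$ and $\bfq_\kappa=1$ if $\bfa_\kappa=0$; then $|\bfq_\kappa|=1$ and $\bfq_\kappa\bfa_\kappa=\overline{\bfa_\kappa}$ in every case, $\ome_\kappa(z_\kappa)=\bfq_\kappa z_\kappa$, and $\calc_{\bfp,\bfq}h(z)=\overline{h(\overline{\bfq_1}\,\overline{z_1},\dots,\overline{\bfq_d}\,\overline{z_d})}$, which is a conjugation by Lemma~\ref{lem-Cq*Kz}.

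The core step is to show that $\calc_{\bfp,\bfq}$ commutes with $W_{f,g}$; since both are bounded with domain $\spc$, this is a pointwise identity. Evaluating, $W_{f,g}\calc_{\bfp,\bfq}h(z)=f(z)\,\overline{h(\overline{\ome(g(z))})}$ while $\calc_{\bfp,\bfq}W_{f,g}h(z)=\overline{f(\overline{\ome(z)})}\;\overline{h(g(\overline{\ome(z)}))}$, so $\calc_{\bfp,\bfq}W_{f,g}=W_{f,g}\calc_{\bfp,\bfq}$ reduces to the two symbol identities $g(\overline{\ome(z)})=\overline{\ome(g(z))}$ and $f(z)=\overline{f(\overline{\ome(z)})}$ on $\dom$. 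Both decouple coordinatewise and, in each coordinate, follow at once from $\bfq_\kappa\bfa_\kappa=\overline{\bfa_\kappa}$ and the reality of $\bfb_\kappa$ and $\bfc$ (the coordinates with $\bfa_\kappa=0$ are checked separately and are trivial). Granting the commutation, $\calc_{\bfp,\bfq}W_{f,g}^{*}\calc_{\bfp,\bfq}=\calc_{\bfp,\bfq}W_{f,g}\calc_{\bfp,\bfq}=W_{f,g}\calc_{\bfp,\bfq}^{2}=W_{f,g}$, using $W_{f,g}^{*}=W_{f,g}$ and $\calc_{\bfp,\bfq}^{2}=I$; hence $W_{f,g}$ is $\calc_{\bfp,\bfq}$-symmetric, i.e.\ complex symmetric.

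An equivalent route, staying inside the paper's framework, bypasses the commutation and instead matches \eqref{form-f-her}--\eqref{her-cond-2} against Theorem~\ref{thm-cs-1} for the above $U_1,U_2,\bfq$: here $\bfr=0$, so $\overline{\ome_\kappa(g_\kappa(0))}=\overline{\bfq_\kappa\bfa_\kappa}=\bfa_\kappa$ and $f=\bfc K_{\bfa}=\bfc K_{\overline{\ome(g(0))}}$, matching \eqref{eq-202105261617}; each $g_\kappa$ already has the $U_2$-shape with $\al_\kappa=\bfa_\kappa$ and $\beta_\kappa\bfq_\kappa=\bfb_\kappa$, i.e.\ $\beta_\kappa=\bfb_\kappa\bfa_\kappa/\overline{\bfa_\kappa}$ of modulus $|\bfb_\kappa|$; and since $\al_\kappa+\overline{\al_\kappa}(\beta_\kappa-\al_\kappa^{2})=\bfa_\kappa(\bfb_\kappa-|\bfa_\kappa|^{2}+1)$, the surviving admissibility conditions \eqref{202208121628} and \eqref{cond-202105210836} reduce precisely to $\bfa_\kappa\in\D$ and to \eqref{her-cond-2}. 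I expect the only delicate point, on either route, to be the bookkeeping around the degenerate coordinates $\bfa_\kappa=0$ (and, if one insists on a nonempty $U_1$, relocating the indices with $\bfa_\kappa\ne 0$ into $U_1$ with a suitable $\bfp_\kappa\in\D\setminus\{0\}$, whose existence \eqref{her-cond-2} guarantees); the rest is direct substitution.
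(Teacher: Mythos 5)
Your proposal is correct, and your ``equivalent route'' is precisely the paper's proof: the paper simply takes $U_2=\zd$ and declares the parameters $\al_\kappa,\bfq_\kappa,\beta_\kappa$ exactly as you do (with the same case split at $\bfa_\kappa=0$), leaving to the reader the verification you carry out that $f=\bfc K_{\bfa}=\widetilde{c}K_{\overline{\ome(g(0))}}$, that $\beta_\kappa\bfq_\kappa=\bfb_\kappa$, and that $\al_\kappa+\overline{\al_\kappa}(\beta_\kappa-\al_\kappa^2)=\bfa_\kappa(\bfb_\kappa-|\bfa_\kappa|^2+1)$ turns \eqref{cond-202105210836} into \eqref{her-cond-2}. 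Your primary route is a genuine (if mild) variant: instead of matching symbols against Theorem \ref{thm-cs-1}, you establish the intertwining $\calc_{\bfp,\bfq}W_{f,g}=W_{f,g}\calc_{\bfp,\bfq}$ directly from the two symbol identities $g(\overline{\ome(z)})=\overline{\ome(g(z))}$ and $f(z)=\overline{f(\overline{\ome(z)})}$, and then concludes via $\calc W^{*}\calc=\calc W\calc=W\calc^{2}=W$. This buys a self-contained explanation of \emph{why} that particular conjugation works (real symmetry plus commutation with a conjugation always yields $\calc$-selfadjointness), at the cost of still needing Theorem \ref{thm-her-2} to secure boundedness, maximality, and the explicit forms \eqref{form-f-her}--\eqref{her-cond-2} before the pointwise computation is legitimate; the paper's route reuses its already-proved classification and so is shorter on the page. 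Both arguments are sound, and your attention to the degenerate coordinates $\bfa_\kappa=0$ and to the empty $U_1$ matches what the paper tacitly assumes.
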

\begin{proof}
Suppose that the operator $W_{f,g}$ is real symmetric. By Theorem \ref{thm-her-2}, the functions $f(\cdot),g(\cdot)$ satisfy \eqref{form-f-her}-\eqref{her-cond-2}. It then is $\calc_{\bfp,\bfq}$-symmetric, where $U_2=\zd$ and
\begin{gather*}
    \alpha_\kappa=
    \begin{cases}
    \bfa_\kappa\quad\text{if $\bfa_\kappa\ne 0$},\\
    0\quad\text{if $\bfa_\kappa=0$},
    \end{cases}
    \bfq_\kappa=
    \begin{cases}
    \dfrac{\overline{\bfa_\kappa}}{\bfa_\kappa}\quad\text{if $\bfa_\kappa\ne 0$},\\
    1\quad\text{if $\bfa_\kappa=0$},
    \end{cases}
    \beta_\kappa=
    \begin{cases}
    \dfrac{\bfa_\kappa\bfb_\kappa}{\overline{\bfa_\kappa}}\quad\text{if $\bfa_\kappa\ne 0$},\\
    \bfb_\kappa\quad\text{if $\bfa_\kappa=0$}.
    \end{cases}
\end{gather*}
\end{proof}

\section*{Acknowledgements}
The paper was completed during a scientific stay of P.V. Hai at the Vietnam Institute for Advanced Study in Mathematics (VIASM). He would like to thank the VIASM for financial support and hospitality.

\nocite{*}
\bibliographystyle{plain}
\bibliography{refs}
\end{document}